\newtheorem{theorem}{Theorem}[section]
\newtheorem{lemma}[theorem]{Lemma}
\newtheorem{proposition}[theorem]{Proposition}
\theoremstyle{definition}
\newtheorem{definition}[theorem]{Definition}
\newtheorem{claim}{Claim}
\newtheorem*{HC}{(H)}
\newtheorem*{HSC}{(H')}
\newtheorem*{HDRZC}{(DRZ)}
\newtheorem*{HSDRZC}{(DRZ')}
\theoremstyle{remark}
\newtheorem{remark}[theorem]{Remark}
\numberwithin{equation}{section}
\newcommand\Z{\ensuremath{\mathbb Z}}\newcommand\A{\ensuremath{\mathbb A}}
\newcommand\Q{\ensuremath{\mathbb Q}}\newcommand\R{\ensuremath{\mathbb R}}
\newcommand\C{\ensuremath{\mathbb C}}\newcommand\F{\ensuremath{\mathbb F}}
\newcommand\Qb{{\overline\Q}}
\newcommand\cM{\ensuremath{\mathcal M}}
\newcommand\CM{\ensuremath{\operatorname{CM}}}
\newcommand\disc{\operatorname{disc}}
\newcommand\Div{\operatorname{Div}}
\newcommand\End{\operatorname{End}}
\newcommand\Gal{\operatorname{Gal}}
\newcommand\GL{\operatorname{GL}}
\newcommand\Hom{\operatorname{Hom}}
\newcommand\Ind{\operatorname{Ind}}
\newcommand\M{\operatorname{M}}
\newcommand\Nm{\operatorname{Nm}}
\newcommand\Pic{\operatorname{Pic}}
\newcommand\PGL{\operatorname{PGL}}
\newcommand\SL{\operatorname{SL}}
\newcommand\Tr{\operatorname{Tr}}
\newcommand\Jac{\operatorname{Jac}}
\renewcommand{\cH}{\mathcal{H}}
\newcommand{\fp}{{\mathfrak{p}}}
\newcommand{\B}{{\mathcal{B}}}
\newcommand{\oR}{{\mathcal{R}}}
\newcommand{\cE}{{\mathcal{E}}}
\newcommand{\cV}{{\mathcal{V}}}
\newcommand{\cT}{{\mathcal{T}}}
\newcommand{\fd}{{\mathfrak{d}}}
\newcommand{\cU}{{\mathcal{U}}}
\def\cO{{\mathcal O}}
\newcommand{\smtx}[4]{\left(\begin{smallmatrix}#1&#2\\#3&#4\end{smallmatrix}\right)}
\newcommand{\colvec}[2]{\left(\begin{matrix}#1\\#2\end{matrix}\right)}
\def\M{\operatorname{M}}
\def\p{\mathfrak p}\def\P{\mathbb P}
\newcommand{\comp}{\begin{picture}(6,5)(-3,-2)\put(0,1){\circle{2}} \end{picture}}\def\circ{\comp}
\def\fN{\mathfrak N}
\def\rec{\operatorname{rec}}
\newcommand{\ra}{\rightarrow}
\newcommand{\lra}{\longrightarrow}
\newcommand{\cerednikdrinfeld}{\v{C}erednik--Drinfel'd}
\def\Xint#1{\mathchoice
{\XXint\displaystyle\textstyle{#1}}%
{\XXint\textstyle\scriptstyle{#1}}%
{\XXint\scriptstyle\scriptscriptstyle{#1}}%
{\XXint\scriptscriptstyle\scriptscriptstyle{#1}}%
\!\int}
\def\XXint#1#2#3{{\setbox0=\hbox{$#1{#2#3}{\int}$}
\vcenter{\hbox{$#2#3$}}\kern-.5\wd0}}
\begin{document}

\title[A $p$-adic construction of ATR points on $\Q$-curves]{A $p$-adic construction of ATR points on $\Q$-curves}
\keywords{Algebraic points on elliptic curves, ATR points, Heegner points}

\author{Xavier Guitart}
\address{ Institut f\"ur Experimentelle Mathematik, Essen}
\curraddr{}
\email{xevi.guitart@gmail.com}

\author{Marc Masdeu}
\address{University of Warwick, Warwick}
\curraddr{}
\email{m.masdeu@warwick.ac.uk}

\subjclass[2010]{11G05 (11G18, 11Y50)}

\date{\today}

\dedicatory{}

\begin{abstract}
In this note we consider certain elliptic curves defined over real quadratic fields isogenous to their Galois conjugate. We give a construction of algebraic points on these curves defined over almost totally real number fields. The main ingredient is the system of Heegner points arising from Shimura curve uniformizations.  In addition, we provide an explicit $p$-adic analytic formula which allows for the effective, algorithmic calculation of such points.
\end{abstract}

\maketitle

\section{Introduction}
At the beginning of the 2000's Darmon introduced two constructions of local points on modular elliptic curves over number fields: the Stark--Heegner points \cite{Da1} and the ATR points \cite[Chapter 8]{Da2}. Both types of points are expected to be algebraic and to behave in many aspects as the more classical Heegner points. Although the two constructions bear some formal resemblances, a crucial difference lies in the nature of the local field involved: while the former is $p$-adic, the later is archimedean. In order to explain the importance of this distinction, let us briefly recall the constructions and some of the features that are currently known about them.

Let $E$ be an elliptic curve defined over $\Q$ of conductor $N$, and let $K$ be a real quadratic field such that the sign of the functional equation of $L(E/K,s)$ is $-1$. Let $p$ be a prime that exactly divides $N$ and which is inert in $K$. Under an additional Heegner-type hypothesis, Stark--Heegner points in $E(K_p)$ are constructed in \cite{Da1} by means of certain $p$-adic line integrals, and they are conjectured to be global and defined over narrow ring class fields of $K$. This construction was generalized by Greenberg \cite{Gr} to the much broader setting in which $E$ is defined over a totally real number field $F$ of narrow class number $1$, the extension $K/F$ is any non-CM quadratic extension in which some prime divisor of the conductor of $E$ is inert, and $L(E/K,s)$ has sign $-1$.

There is extensive numerical evidence in support of the rationality of such $p$-adic points (cf. \cite{darmon-green}, \cite{darmon-pollack}, \cite{GM-elt}, \cite{GM-quat}), but the actual proof in general seems to be still far out of reach. In spite of this, in some very special cases Stark--Heegner points are known to be global. In these particular settings they coexist with Heegner points, and they can actually be seen to be related to them (cf. \cite{BD-genus}, \cite{LV}, \cite{GrSeSh}). The $p$-adic nature of the points seems to play a key role in these arguments, by means of the connection between the formal logarithm of the Stark--Heegner points and the special values of suitable $p$-adic $L$-functions (see also \cite{BDP}, \cite{DR1}, \cite{DR2}, and \cite{BDR}).

The archimedean counterparts to these points, as introduced in \cite[Chapter 8]{Da2} and later generalized by Gartner \cite{Ga-art}, seem to be even more mysterious. The simplest and original setting is that of an elliptic curve $E$ defined over a totally real number field $F$, and $M/F$ a quadratic \emph{Almost Totally Real} (ATR) extension (i.e., $M$ is has exactly one complex place). In this case the points are constructed by means of complex integrals and thus they lie in $E(\C)$. They are also expected to be global, and there is some numerical evidence of it \cite{DL}, \cite{GM-elt}. 

However, in the archimedean constructions it is not (the logarithm of) the points which is expected to be related to special values of complex $L$-functions, but their heights, very much in the spirit of Gross--Zagier formulas. It is this crucial difference with the $p$-adic case what seems to prevent the success of any attempt of showing their rationality, even in the very particular instances in which they coexist with Heegner points. It could be arguably said that complex ATR points are much more difficult to handle than their $p$-adic counterparts. Thus, even in the simplest situations in which one wants to compare them with Heegner points in order to show their rationality, it is desiderable to have $p$-adic constructions of such points at one's disposals.

In light of the above discussion, the goal of the present paper is to present a $p$-adic construction of algebraic points defined over ATR fields. To be more precise, we consider a real quadratic field $F$ and a non-CM elliptic curve $E/F$ that is $F$-isogenous to its Galois conjugate (this is sometimes referred to as a $\Q$-curve in the literature). Suppose that $M/F$ is a quadratic ATR extension such that the sign of the functional equation of $L(E/M,s)$ is $-1$. We describe a $p$-adic construction of algebraic points in $E(M)$, which are manufactured by means of suitable Heegner points in a certain Shimura curve parametrizing $E$.

The points that we construct are algebraic (for they essentially come from Heegner points in certain modular abelian varieties) and given in terms of $p$-adic line integrals. Observe that in this set up one can also consider $p$-adic Stark--Heegner points, e.g., the ones constructed by Greenberg \cite{Gr}, and it would be very interesting to investigate the possible relationship between these two types of points. Of course, it would also be of interest to compare them with the ATR points constructed by Darmon and G\"artner, although as explained before in this case only information about the heights of the points seems to be directly available from the involved $L$-functions. 

The fact that our construction is given in terms of $p$-adic line integrals also has another consequence, which constitutes in fact one of the remarkable features of the construction: it gives rise to a completely explicit and efficient algorithm for computing the points.

Our construction is inspired by the work of Darmon--Rotger--Zhao \cite{DRZ}. Since it builds on this work, the next section is devoted to recalling the points introduced in \cite{DRZ}, as well as to giving an overview of the rest of the paper.

\subsection*{Acknowledgments} We thank Victor Rotger for suggesting the problem and Jordi Quer for providing the equation of the $\Q$-curve used in  \S\ref{section: example}. We also thank the anonymous referee, whose many detailed and helpful comments allowed us to significantly improve both the content and the clarity of this article. Guitart was financially supported by SFB/TR 45.

\section{Background and outline of the construction} Our construction can be seen as a generalization of that of~\cite{DRZ}. In order to put it in context, it is illustrative to examine first the case of elliptic curves over $\Q$. So let us (temporarily) denote by $E$ an elliptic curve over $\Q$ of conductor $N$. The Modularity Theorem \cite{Wi},\cite{TW}, \cite{BCDT} provides a non-constant map
\begin{align}\label{eq: classical uniformization}
 \pi_E\colon  X_0(N)\lra E,
\end{align}
where $X_0(N)$ denotes the modular curve parametrizing cyclic isogenies $C\ra C'$ of degree $N$. This moduli interpretation endows $X_0(N)$ with a canonical set of algebraic points known as CM  or Heegner points which give rise, when projected under $\pi_E$, to a systematic construction of algebraic points on $E$.

To be more precise, suppose that $K$ is a quadratic imaginary field  and $\cO\subset K$ is an order of discriminant coprime with $N$. In addition, suppose that $K$ satisfies the \emph{Heegner condition}:
\begin{HC}
 All the primes dividing $N$ are split in $K$. 
\end{HC}
  Under this assumption, there exist elliptic curves $C$ and $C'$ with complex multiplication by $\cO$, together with  a cyclic isogeny $C\ra C'$ of degree $N$. The theory of complex multiplication implies that the point in $X_0(N)$ corresponding to $C\ra C'$ is, in fact, algebraic and defined over the ring class field of $\cO$.

Moreover, the corresponding Heegner point on $E$  can be computed by means of the complex uniformization derived from \eqref{eq: classical uniformization} which, in view of the identifications $X_0(N)(\C)\simeq \Gamma_0(N)\backslash (\cH \cup \P^1(\Q))$ and $E(\C)\simeq \C/\Lambda_E$, is of the form
\begin{align}
\pi_E\colon\Gamma_0(N)\backslash (\cH \cup \P^1(\Q)) \lra \C/\Lambda_E.
\end{align}
The formula for computing the Heegner point corresponding to $C\ra C'$ is then
\begin{align}\label{eq: hp}
  \Phi_{\text{W}}\left( \int_\tau^{i\infty}2\pi i f_E(z)dz \right),
\end{align}
where $f_E(z)=\sum_{n\geq 1} a_n e^{2\pi i nz}$ denotes the weight two newform for $\Gamma_0(N)$ whose $L$-function equals that of $E$, the map $\Phi_W\colon \C/\Lambda_E \ra E(\C)$ is the Weierstrass uniformization, and $\tau\in \cH\cap K$ is such that $C\simeq \C/\Z+\tau\Z$ and $C'\simeq \C/\Z+N\tau\Z$. 

This type of Heegner points are one of the main ingredients intervening in the proof of the Birch and Swinnerton-Dyer conjecture for curves over $\Q$ of analytic rank at most $1$ \cite{GZ},\cite{Ko}. Moreover, and perhaps more relevant to the purpose of the present note, the formula \eqref{eq: hp} is completely explicit and computable, as the Fourier coefficients $a_n$ can be obtained by counting points on the several reductions of $E\pmod p$. In other words, \eqref{eq: hp} provides with an effective algorithm for computing points on $E$ over abelian extensions of $K$, which turn out to be of infinite order whenever the analytic rank is $1$. See, e.g., \cite{El} for a discussion of this method and examples of computations.

Suppose now that $K$ does not satisfy the Heegner condition, and factor $N$ as $N=N^+ N^-$, where $N^+$ contains the primes that split in $K$ and $N^-$ those that are inert (we assume that the discriminant of $K$ is coprime to $N$). In this case there is a generalization of the above Heegner point construction, which works under the less restrictive \emph{Heegner--Shimura condition}:
\begin{HSC}
   $N^-$ is squarefree and the product of an even number of primes.
\end{HSC}
In this set up, when $N^->1$ the map \eqref{eq: classical uniformization} is replaced by a uniformization of the form
\begin{align}\label{eq: shimura uniformization}
  \pi_E^{N^-}\colon X_0(N^+,N^-)\lra E,
\end{align}
where $X_0(N^+,N^-)$ is the Shimura curve of level $N^+$ associated to the indefinite quaternion algebra $B/\Q$ of discriminant $N^-$. The moduli interpretation of $X_0(N^+,N^-)$, combined with the theory of complex multiplication, can also be used to construct Heegner points on $E$ that are defined over ring class fields of orders $\cO\subset K$ of conductor coprime to $N$. 

There is also an analogue of formula \eqref{eq: hp}, but in this case it seems to be much more difficult to compute in practice. Indeed, one needs to integrate modular forms associated to $B$ and, since $B$ is division, the Shimura curve $X_0(N^+,N^-)$ has no cusps. Therefore the corresponding modular forms do not admit Fourier expansions, which are the crucial tool that allow for the explicit calculation of \eqref{eq: hp}.  Elkies developed methods for performing such computations under some additional hypothesis \cite{El-SC}. More recently Voight--Willis \cite{VW} using Taylor expansions  and Nelson \cite{nelson} using the Shimizu lift have been able to compute some of these CM points.

In a different direction, there is an alternative method that allows for the numerical calculation of Heegner points associated to quaternion division algebras. The key idea is to fix a prime $p\mid N^-$ and to use the rigid analytic $p$-adic uniformization derived from \eqref{eq: shimura uniformization}, instead of the complex one. The \cerednikdrinfeld{} theorem provides a model for $X_0(N^+,N^-)$ as the quotient of the $p$-adic upper half plane $\cH_p$ by $\Gamma$, a certain subgroup in a definite quaternion algebra. Bertolini and Darmon \cite{BD-Heegner-points}, building on previous work of Gross \cite{Gross}, give an explicit formula for the uniformization map
\begin{align*}
  \Gamma\backslash \cH_p \lra E(\C_p)
\end{align*}
in terms of the so-called multiplicative $p$-adic line integrals of rigid analytic modular forms for $\Gamma$ (see also \cite{MOK} for a generalization to curves over totally real fields). Such integrals can be very efficiently computed, thanks to the methods of M. Greenberg \cite{Gr2} (which adapt Pollack--Stevens' overconvergent modular symbols technique \cite{PS}) and to the explicit algorithms provided by Franc--Masdeu \cite{FM}.

Let us now return to the setting that we consider in the present note. Namely, $F$ is a real quadratic field and $E/F$ is an elliptic curve without complex multiplication that is $F$-isogenous to its Galois conjugate. As a consequence of Serre's modularity conjecture \cite{KW1,KW2} and results of Ribet \cite{Ribet}, $E$ can be parametrized by a modular curve of the form $X_1(N)$, where the integer $N$ is related to the conductor $\mathfrak{N}_E$ of $E/F$ (cf. \S \ref{subsection: Q-curves} for more details on the precise relation). This property was exploited by Darmon--Rotger--Zhao in \cite{DRZ} in order to construct certain algebraic ATR points on $E$ by means of Heegner points on $X_1(N)$ . Let us briefly explain the structure of the construction.

Consider the uniformization mentioned above
\begin{align}\label{eq: uniformization X_1}
 \pi_E\colon  X_1(N)\lra E.
\end{align}
We assume, for simplicity, that $N$ is squarefree. We remark that $\pi_E$ is defined over $F$. Let $M/F$ be a quadratic extension that has one complex and two real places (this is what is known as an Almost Totally Real (ATR) extension, because it has exactly one complex place). There is a natural quadratic imaginary field $K$ associated to $M$ as follows: if $M=F(\sqrt{\alpha})$ for some $\alpha\in F$, then $K=\Q(\sqrt{\Nm_{F/\Q}(\alpha)})$. Suppose that $K$ satisfies the following Heegner-type condition, which might be called the Heegner--Darmon--Rotger--Zhao condition:
\begin{HDRZC}
  All the primes dividing $N$ are split in $K$.
\end{HDRZC}
Under this assumption, the method presented in \cite{DRZ} uses Heegner points on $X_1(N)$ associated to orders in $K$ to construct points in $E(M)$, which are shown to be of infinite order in situations of analytic rank one. One of the salient features of this construction is that it is explicitly computable. In fact, there is a formula analogous to \eqref{eq: hp}, giving the points as integrals of certain classical modular forms for $\Gamma_1(N)$.

In the first part of the paper, which consists of Sections \ref{section: Q-curves and ATR extensions} to \ref{section: Heegner points on Q-curves}, we extend the construction of \cite{DRZ} to the situation in which $K$, whose discriminant is assumed to be coprime to $N$, satisfies the following, less restrictive, Heegner--Shimura-type condition (where, as before, we write $N=N^+ N^-$ with $N^+$ containing the primes that split in $K$ and $N^-$ containing those that remain inert):
\begin{HSDRZC}
   $N^-$ is squarefree and the product of an even number of primes.
\end{HSDRZC}
As we will see, this condition is satisfied whenever $L(E/M,s)$ has sign $-1$ (see Proposition \ref{prop: primes dividing N minus} below). In particular, it is satisfied when the analytic rank of $E/M$ is $1$.

The idea of our construction, inspired by the case of curves over $\Q$ reviewed above, consists in replacing \eqref{eq: uniformization X_1} by a uniformization of the form
\begin{align}
  \pi_E^{N^-}\colon X_1(N^+,N^-)\lra E,
\end{align}
where $X_1(N^+,N^-)$ is a suitable Shimura curve attached to an indefinite quaternion algebra $B/\Q$ of discriminant $N^-$ and level structure ``of $\Gamma_1$-type''. This main construction of ATR points in $E(M)$ is presented in \S \ref{section: Heegner points on Q-curves}, after developing some preliminary results. Namely, in \S \ref{section: Q-curves and ATR extensions} we briefly review $\Q$-curves and we prove some results in Galois theory that relate certain ring class fields of $K$ with $M$, and in \S \ref{section: CM points on Shimura curves with quadratic character} we define the CM points on the Shimura curves that will play a role in our construction and determine their field of definition.

Just as in the classical case of curves over $\Q$, the CM points in $X_1(N^+,N^-)$ (and hence the points that we construct in $E(M)$) are difficult to compute using the complex uniformization. Once again, the absence of cusps in $X_1(N^+,N^-)$ and thus the lack of Fourier coefficients makes it difficult to compute the integrals that appear in the explicit formula (cf.~\eqref{eq: pi_f(D) complex} below).

The second part of the article gives a $p$-adic version of the construction. As has been mentioned in the introduction, this might be useful in order to relate it to $p$-adic Stark-Heegner points. Another advantage of this $p$-adic construction is that it is explicitly computable. Concretely, in \S \ref{section: p-adic uniformization} we exploit the $p$-adic uniformization of $X_1(N^+,N^-)$ given by the \cerednikdrinfeld{} theorem and the explicit uniformization of Bertolini--Darmon in terms of multiplicative $p$-adic integrals which, combined with a slight generalization of the algorithms of Franc--Masdeu \cite{FM}, provides an efficient algorithm for computing algebraic ATR points in $\Q$-curves. We conclude with an explicit example of such computation in \S \ref{section: example}.

\section{$\Q$-curves and ATR extensions}\label{section: Q-curves and ATR extensions}
In this section we recall some basic facts on $\Q$-curves and their relation with classical modular forms for $\Gamma_1(N)$. We also give some preliminary results on certain Galois extensions associated to ATR fields that will be needed in the subsequent sections, as they will be related to the field of definition of the Heegner points under consideration.

\subsection{$\Q$-curves and modular forms}\label{subsection: Q-curves} 
An elliptic curve $E$ over a number field is said to be a \emph{$\Q$-curve} if it is isogenous to all of its Galois conjugates. One of the motivations for studying $\Q$-curves is that they arise as the $1$-dimensional factors over $\Qb$ of the modular abelian varieties attached to classical modular forms. More precisely, let $f = \sum_{n\geq 1}a_nq^n$ be a normalized newform of weight two for $\Gamma_1(N)$, and denote by $K_f=\Q(\{a_n\})$ the number field generated by its Fourier coefficients. Let $A_f$ be the abelian variety over $\Q$ associated to $f$ by Shimura in \cite[Theorem 7.14]{shimura-book}. The dimension of $A_f$ is equal to $[K_f\colon\Q]$ and $\End_\Q(A_f)\otimes\Q$, its algebra of endomorphisms defined over $\Q$, is isomorphic to $K_f$. In particular, $A_f$ is simple over $\Q$. However, it is not necessarily absolutely simple and, in general, it decomposes up to $\Qb$-isogeny as $A_f\sim_\Qb C^n$, for some $n\geq 1$ and some absolutely simple abelian variety $C/\Qb$ which is isogenous to all of its Galois conjugates. Therefore, if $C$ turns out to be of dimension $1$, then $C$ is a $\Q$-curve. Conversely, as a consequence of Serre's modularity conjecture and results of Ribet, any $\Q$-curve can be obtained, up to isogeny, by this construction.

We will be interested in certain $\Q$-curves defined over quadratic fields. In the next proposition we characterize them in terms of the modular construction.
\begin{proposition}\label{prop: determination of E}
Let $f\in S_2(\Gamma_1(N))$ be a non-CM newform of Nebentypus $\psi$, and let $F$ be the field associated to $\psi$ (by identifying $\psi$ with a character of $\Gal(\Qb/\Q)$ via class field theory).  Suppose that $\psi$ has order $2$ and that $[K_f\colon\Q]=2$. Then:
\begin{enumerate}
\item $F$ is real quadratic;
\item $K_f$ is imaginary;
\item $A_f$ is $F$-isogenous to $E^2$, where $E/F$ is a $\Q$-curve.
\end{enumerate}
In addition, if $N$ is odd or squarefree then the conductor $\fN_E$ of $E/F$ is generated by a rational integer, say $\fN_E=N_0\cO_F$ for some $N_0\in\Z_{\geq 0}$, and $N = N_0 \cdot N_\psi$ where $N_\psi$ stands for the conductor of $\psi$.
\end{proposition}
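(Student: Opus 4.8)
The plan is to read off~(1) and~(2) from the elementary arithmetic of the Nebentypus, to deduce~(3) from the inner twist hidden in the Fourier coefficients of $f$, and to obtain the conductor formula by comparing $A_f$ with the Weil restriction $\Res_{F/\Q}E$. For~(1) and~(2): since $f$ has weight $2$, $\psi(-1)=(-1)^2=1$, so $\psi$ is an even character, and as it has order exactly $2$ the field $F$ it cuts out is a genuinely quadratic, real field. For~(2), recall that for $p\nmid N$ the Petersson adjoint of $T_p$ on $S_2(\Gamma_1(N))$ is $\overline{\psi(p)}\,T_p$, so the Hecke eigenvalues of $f$ satisfy $\overline{a_p}=\psi(p)^{-1}a_p=\psi(p)a_p$ for all $p\nmid N$. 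Were $K_f$ contained in $\R$, this would force $a_p=0$ for every prime with $\psi(p)=-1$, a set of density $1/2$; but the coefficients of a non-CM newform vanish only along a set of primes of density zero. Hence $K_f\not\subseteq\R$, and being quadratic it is imaginary.

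For~(3), let $\sigma$ be the nontrivial automorphism of $K_f$, which is complex conjugation because $K_f$ is imaginary quadratic. The relation above reads $a_p^{\sigma}=\psi(p)a_p$ for $p\nmid N$, i.e.\ $f^{\sigma}=f\otimes\psi$: the form $f$ has an inner twist by $\psi$, and correspondingly $\rho_{f,\lambda}^{\sigma}\cong\rho_{f,\lambda}\otimes\psi$ for the attached $\lambda$-adic representations. Restricting to $\Gal(\Qb/F)=\ker\psi$ makes $\rho_{f,\lambda}$ and its twist isomorphic, so that over $F$ the two $\sigma$-conjugate constituents of the $\lambda$-adic Tate module of the abelian surface $A_f$ become isomorphic. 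By the structure theory of modular abelian varieties with extra twists (Ribet, Quer), this forces $A_f$ to be $F$-isogenous to $E^2$ for an elliptic curve $E/F$ with $V_\lambda(E)\cong\rho_{f,\lambda}|_{\Gal(\Qb/F)}$ (in particular $A_f$ is not absolutely simple with quartic CM or quaternionic multiplication). Since $A_f$ is defined over $\Q$, its base change $(A_f)_F$ is $\Gal(F/\Q)$-invariant, so $(E^{\tau})^2=(E^2)^{\tau}\sim_F\bigl((A_f)_F\bigr)^{\tau}=(A_f)_F\sim_F E^2$ for $\tau$ the generator of $\Gal(F/\Q)$, and as $E$ is an elliptic curve this forces $E\sim_F E^{\tau}$; thus $E$ is a $\Q$-curve. (One checks along the way that $E$ has no complex multiplication, since otherwise $\rho_{f,\lambda}$ would be potentially dihedral and $f$ would have CM.)

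For the conductor statement, the universal property of the Weil restriction gives $\Hom_\Q(A_f,\Res_{F/\Q}E)=\Hom_F((A_f)_F,E)$, which is nonzero since $(A_f)_F\sim_F E^2$; because $A_f$ is simple over $\Q$ and $\dim A_f=\dim\Res_{F/\Q}E=2$, any nonzero such morphism is an isogeny, so $A_f\sim\Res_{F/\Q}E$. Comparing conductors over $\Q$, using $\operatorname{cond}(A_f)=N^{\dim A_f}=N^2$, the standard conductor formula for a Weil restriction, and the fact that the discriminant of the real quadratic field $F$ has absolute value $N_\psi$ (the conductor of $\psi$), one obtains $N^2=N_\psi^2\cdot\Nm_{F/\Q}(\fN_E)$. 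In particular $N_\psi\mid N$ and $\Nm_{F/\Q}(\fN_E)=(N/N_\psi)^2$ is a perfect square. On the other hand, pulling $\Res_{F/\Q}E\sim A_f$ back to $F$ gives $E\times E^{\tau}\sim_F E^2$, so $\fN_E={}^{\tau}\fN_E$ is $\Gal(F/\Q)$-stable; and a $\Gal(F/\Q)$-stable ideal of $\cO_F$ has square norm if and only if every ramified prime divides it to an even power, i.e.\ if and only if it is extended from $\Z$. Hence $\fN_E=N_0\cO_F$ with $N_0^2=(N/N_\psi)^2$, and therefore $N=N_0N_\psi$.

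The genuinely non-formal step is the one for~(3): one must invoke the structure theorem for $A_f$ to know that the inner twist splits $A_f$ over precisely $F$, and into the square of an \emph{elliptic} curve, rather than leaving $A_f$ absolutely simple with quartic CM or quaternionic multiplication. The hypothesis that $N$ be odd or squarefree enters only in the final assertion, to keep the conductor of $E$ at the primes ramifying in $F$ under control (the case $p=2$ being the delicate one); the Weil-restriction argument sketched above suggests that this conclusion might hold even without that restriction, but confirming it in general would require a local computation at the ramified primes.
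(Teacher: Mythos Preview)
Your treatment of (1) and (2) matches the paper's (indeed you spell out the density argument for (2) more carefully than the paper, which simply remarks that non-triviality of $\psi$ forces $K_f$ imaginary).

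For (3) the strategy is the same---identify $\psi$ as the unique nontrivial inner twist and conclude that $A_f$ splits over $F$---but where you defer to ``the structure theory of Ribet and Quer'' and flag the exclusion of the QM case as the non-formal step, the paper supplies a concrete argument: by \cite{Pep-Lario} the field $F$ is exactly the minimal field over which all endomorphisms of $A_f$ are defined, and by a lemma of Rotger \cite[Lemma 2.3(i)]{Ro} an absolutely simple abelian surface with quaternionic multiplication has that minimal field \emph{imaginary} quadratic. Since your step (1) shows $F$ is real, the QM case is impossible, and $A_f$ must split as $E^2$ over $F$. This is precisely the missing ingredient you pointed to.

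For the conductor statement your route genuinely diverges from the paper's. The paper simply invokes the main theorem of \cite{GoGu}, which is where the hypothesis ``$N$ odd or squarefree'' enters. Your Weil-restriction argument---showing $A_f\sim_\Q\Res_{F/\Q}E$, comparing $\operatorname{cond}(A_f)=N^2$ with $\operatorname{cond}(\Res_{F/\Q}E)=N_\psi^{\,2}\cdot\Nm_{F/\Q}(\fN_E)$ via the conductor formula for induced representations, and then using Galois-stability together with square norm to force $\fN_E$ to be extended from $\Z$---is more self-contained. In fact your caution at the end is unwarranted: the argument you wrote goes through as stated at all primes, including the ramified ones, so you have actually proved the conductor relation $N=N_0N_\psi$ without the odd-or-squarefree restriction.
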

\begin{proof}
The field $F$ is real because $\psi$, being the Nebentypus of a modular form of even weight, is an even character. 

The Nebentypus induces the complex conjugation on the Fourier coefficients:
  \begin{align}\label{eq: inner}
    a_p = \bar a_p \psi(p),\ \text{ for almost all primes $p$ (cf., e.g., \cite[\S1]{RibetGR})}.
  \end{align}
Since $\psi$ has order $2$ it is non-trivial, which implies that $K_f$ is imaginary.

Property \eqref{eq: inner} also implies that $\psi$ is an inner twist of $f$, in the sense of \cite[\S3]{RibetTwists}. In fact, it is the only non-trivial inner twist because $[K_f\colon \Q]=2$. By \cite[Proposition 2.1]{Pep-Lario} $F$ is the smallest number field where all the endomorphisms of $A_f$ are defined. If $A_f$ were absolutely simple, then $\End_F(A_f)\otimes\Q$ would be isomorphic to a quaternion division algebra over $\Q$ by \cite[Proposition 1.3]{Pyle}. But in that case the minimal field of definition of all the endomorphisms of $A_f$ would be quadratic imaginary by \cite[Lemma 2.3 (i)]{Ro}, so this case is not possible. We conclude that $A_f$ is not absolutely simple, so that $A_f$ is $F$-isogenous to $E^2$, where $E/F$ is a $\Q$-curve.

Suppose now that $N$ is odd or squarefree. Then, by the main theorem in \cite[p. 2]{GoGu} one has that the conductor $\fN_E$ of $E/F$ is of the form $N_0\cO_F$ for some $N_0\in\Z_{\geq 0}$, and the relation $N_0 N_F = N$ is satisfied, where $N_F$ stands for the conductor of $F$. But $N_F = N_\psi$ by the conductor-discriminant formula~\cite[Theorem 3.11]{washington-book}, and this finishes the proof.
\end{proof}
From now on we will assume that $E/F$ is an elliptic curve obtained as in the above proposition. That is to say, it is the absolutely simple factor of $A_f$ for some non-CM newform $f$ whose level $N$ is odd or squarefree, its Nebentypus is of order $2$ and its field of Fourier coefficients is quadratic. In addition, in the case where $N$ is not squarefree we will also assume that $(N_0,N_\psi)=1$, since this condition will be needed in Proposition \ref{prop: primes dividing N minus}.  One can easily find many examples of modular forms satisfying these conditions, for instance by consulting the table \cite[\S4.1]{Quer}. 

\begin{remark}\label{remark: N}
  Observe that the above assumptions imply that $N_\psi$ is squarefree. This is clear if $N$ is squarefree. If $N$ is odd, it follows from the fact that the conductor of a quadratic character is squarefree away from $2$.
\end{remark}

\subsection{ATR extensions}\label{subsection: ATR}

Let $M/F$ be a quadratic almost totally real (ATR) extension of discriminant prime to $\fN_E$, the conductor of $E$, and such that the $L$-function $L(E/M,s)$ has sign $-1$.
This condition is equivalent (see, e.g., the discussion of \cite[\S3.6]{Da2}) to the set
\begin{equation}\label{eq: primes inert}
 \{\p \mid \fN_E\colon \p \text{ is inert in } M\}
\end{equation}
having even cardinality.

We have that $M=F(\sqrt{\alpha})$ for some $\alpha\in F$. We set $M'=F(\sqrt{\alpha'})$, where  $\alpha'$ stands for the Galois conjugate of $\alpha$. Then $\mathcal
M=M M'$ is the Galois closure of $M$ and its Galois group
$\Gal(\mathcal M/\Q)$ is isomorphic to $ D_{2\cdot 4}$, the dihedral group of $8$ elements. The diagram of subfields of $\cM$ is of the form
\begin{equation}\label{eq: diagram}
\xymatrix{
         &           &  \cM \ar@{-}[d]\ar@{-}[dll]\ar@{-}[drr] \ar@{-}[dl]\ar@{-}[dr]     &         &           \\
M   & M'   &      FK      &    L    &    L'     \\
  &  F\ar@{-}[ul]\ar@{-}[u]\ar@{-}[ur] & K' \ar@{-}[u] & K
\ar@{-}[ul]\ar@{-}[u]\ar@{-}[ur]\\
    & &\Q \ar@{-}[ul]\ar@{-}[u]\ar@{-}[ur] & &
}
\end{equation}
where $K=\Q(\sqrt{\alpha\alpha'})$. Observe that $K$ is a quadratic imaginary field, for $M$ is ATR and necessarily
$\alpha\alpha'=\Nm_{F/\Q}(\alpha)<0$. From now on, we will assume that the discriminant of $K$ is relatively prime to $N$. 

We will see that all the primes dividing $N_\psi$ are split in $K$ (see Lemma \ref{lemma: new}). We consider a decomposition of $N$ of the form
$N=N^+ N^-$, where
\begin{itemize}
\item $N^+=N_\psi  N_0^+$, and $N_0^+$  contains the primes $\ell \mid N_0$ such that $\ell$ splits in $K$, and
\item $N^-$  is squarefree and contains the primes $\ell\mid N_0$ such that $\ell$ is inert in $K$.
\end{itemize}

As we already mentioned in the Introduction, one of the central ideas of \cite{DRZ} is that  Heegner points on $A_f$ can be used to manufacture points on
$E(M)$. Indeed, an explicit such construction is provided in  \cite[\S4]{DRZ}, under the assumption that $\fN_E=(1)$. Such construction, in fact, is easily seen to be valid under the following slightly more general Heegner-type condition:
\begin{HDRZC}
 $N^- =1$ (i.e., all the primes dividing $N$ are split in $K$). 
\end{HDRZC}
Let us briefly review the structure of the construction in this case (we refer to \cite{DRZ} for the details). Let us (temporarily) denote by $\Gamma_0(N)$ the subgroup of $\SL_2(\Z)$ of upper triangular matrices modulo $N$, and by $\Gamma_\psi(N)$ the congruence subgroup
\begin{align*}
  \Gamma_\psi(N)=\{\smtx a b c d\in\SL_2(\Z) \colon N\mid c, \, \psi(a)=1\}\subset\Gamma_0(N).
\end{align*}
Let $X_0(N)$ (resp. $X_\psi(N)$) denote the modular curve associated to $\Gamma_0(N)$ (resp. to $\Gamma_\psi(N)$), and let $J_0(N)$ (resp. $J_\psi(N)$) denote its Jacobian. The variety $A_f/\Q$ turns out to be a quotient of $J_\psi(N)/\Q$.  Since $A_f$ is isogenous over $F$ to $E^2$, it follows
that $E$ admits a morphism (defined over $F$) from $J_\psi(N)$. Therefore we obtain a uniformization
\begin{align}\label{eq: uniformization 1}
  J_\psi(N) \lra E
\end{align}
which is defined over $F$.

On the other hand, the inclusion $\Gamma_\psi(N)\subset \Gamma_0(N)$ induces a degree $2$ map
$ X_\psi(N)\ra X_0(N)$, 
and the Heegner points in $X_\psi(N)$ are the preimages of the Heegner points in $X_0(N)$. Denote by $M_0(N)\subset \M_2(\Z)$ the set of matrices which are upper triangular modulo $N$. An embedding $\varphi\colon K\hookrightarrow M_2(\Q)$ is said to be of conductor $c$ and level $N$ if $\varphi^{-1}(M_0(N))$ is equal to $\cO_c$, the order of conductor $c$. The Heegner points in $X_0(N)$ associated to $\cO_c$ are in one to one correspondence with the optimal embeddings of level $N$ and conductor $c$. They are defined over its ring class field $H_c$, so that their preimages in $X_\psi(N)$ are defined over a certain quadratic extension $L_c$ of $H_c$ (see \S\ref{subsection: CM points} for the details). This gives rise to Heegner points in $J_\psi(N)$ defined over $L_c$.

 One of the results proved in \cite{DRZ} is that, for suitable choices of $c$, the extension $L_c$ contains $L$ (cf. \S \ref{subsec: Extensions of L} for a generalization of this result). By taking the trace from $L_c$ down to $L$ one obtains a point in $J_\psi(N)(L)$. Summing it with its conjugate by an appropriate element in
$\Gal(\cM/\Q)$ produces a point on $J_\psi(N)(M)$. Finally, projecting  to $E$ via \eqref{eq: uniformization 1} yields the point on $E(M)$.

The  reason why the construction outlined above only works under the hypothesis that $N^-=1$ is that, otherwise, there
do not exist
optimal embeddings $\varphi\colon K\hookrightarrow M_2(\Q)$ of conductor $c$ and level $N$. That is to say, there are no Heegner points in $X_0(N)$ defined over ring class fields of $K$. 

The main goal of the present article is to provide a construction of Heegner points on $E(M)$ in the case $N^->1$. For
that purpose, and similarly to the classical case of Heegner points on curves over $\Q$, we need to consider
Heegner points coming from  Shimura curves attached to division quaternion algebras. In the next section we
introduce the Shimura curves that will play the role of $X_\psi(N)$ in our construction, and we discuss Heegner points
on them. 

Before that, we state some Galois properties of the fields in Diagram \eqref{eq: diagram} and about certain number
fields $L_c$, attached to orders in $K$ of conductor $c$ that will be the fields of definition of Heegner points. We
also introduce some more notation that will be in force for the rest of the article.

\subsection{Galois properties and the number of primes dividing $N^-$} In this subsection we study those properties of the field diagram \eqref{eq: diagram} that are needed later. Let \begin{align*}\chi_M,\chi_M'\colon G_F\lra \{\pm 1\}\end{align*} denote the quadratic characters of $G_F=\Gal(\Qb/F)$ cutting out the extensions $M$ and $M'$, respectively. Observe that we
can, and often do, view them as characters on the ideles $\A_F^\times$. Similarly we define the characters
\[
 \chi_L,\chi_L'\colon G_K\lra \{\pm 1\},
\]
and view them as characters of $\A_K^\times$. 
\begin{remark}\label{rk: M and M'}
Observe that $M=F(\sqrt{\alpha})$ by construction,  and $M'=F(\sqrt{\alpha'})$, where $\alpha'$ denotes the $\Gal(F/\Q)$-conjugate of $\alpha$. In particular, if $p$ is a prime that splits in $F$, say as $p\cO_F=\fp\fp'$, then $ \chi_M(\fp)=\chi_M'(\fp')$, for the splitting behavior of $\fp$ in $M/F$ is the same as that of $\fp'$ in $M'/F$. A similar observation applies for $L$.
\end{remark}

We also denote by $\varepsilon_F$ and $\varepsilon_K$ the quadratic characters on $\A_\Q^\times $ corresponding to $F$
and $K$, and by
\[
\Nm_{F/\Q}\colon \A_F^\times \lra \A_{\Q}^\times,\ \ \Nm_{K/\Q}\colon \A_K^\times \lra \A_{\Q}^\times 
\]
the norms on the ideles. Observe that, as remarked above, $F$ is the field cut out by $\psi$. This means that, in fact,  $\varepsilon_F=\psi$. 

We will make use of the following properties of Diagram
\eqref{eq: diagram}, which are given in Proposition 3.2 of \cite{DRZ}.
\begin{lemma}\label{lemma: diagram properties}
 \begin{enumerate}
  \item\label{item1} $\chi_M\cdot \chi_M'=\varepsilon_K\circ \Nm_{F/\Q}$ and $\chi_L\cdot\chi_L'=\varepsilon_F\circ \Nm_{K/\Q}$.
\item\label{item2} The restriction of $\chi_M$ and $\chi_M'$ to $\A_{\Q}^\times$ is $\varepsilon_K$, and the restriction of
$\chi_L$ and $\chi_L'$ to $\A_{\Q}^\times$ is $\varepsilon_F$.
\item $\operatorname{Ind}_F^\Q \chi_M = \operatorname{Ind}_K^\Q\chi_L.$
 \end{enumerate}
\end{lemma}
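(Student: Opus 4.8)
The plan is to transport all three identities into the finite Galois group $G=\Gal(\cM/\Q)\cong D_{2\cdot 4}$ and then feed them to the standard class field theory dictionary. First I would fix a presentation $G=\langle r,s\mid r^4=s^2=1,\ srs^{-1}=r^{-1}\rangle$ and make explicit the subgroup--subfield correspondence behind Diagram~\eqref{eq: diagram}. This is the only step involving genuine (if routine) work: from $M=F(\sqrt\alpha)$, $M'=F(\sqrt{\alpha'})$, $FK=F(\sqrt{\alpha\alpha'})$, together with the fact that $\alpha\alpha'=\Nm_{F/\Q}(\alpha)<0$ is a non-square in the real field $F$, one reads off that $\Gal(\cM/F)$ and $\Gal(\cM/K)$ are the two Klein four-subgroups of $G$, that $\Gal(\cM/FK)=\langle r^2\rangle$ is the centre, and that $M,M'$ (respectively $L,L'$) correspond to the two non-central involutions lying in $\Gal(\cM/F)$ (respectively in $\Gal(\cM/K)$). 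With this in hand, $\chi_M,\chi_M',\chi_L,\chi_L'$ become the evident order-two characters of these two Klein four-groups, and $\varepsilon_F,\varepsilon_K$ the order-two characters of $G^{\ab}=G/\langle r^2\rangle$ with kernels $\Gal(\cM/F)$ and $\Gal(\cM/K)$.

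For (1): by Kummer theory $\chi_M\chi_M'$ is the quadratic character of $G_F$ attached to the class of $\alpha\alpha'=\Nm_{F/\Q}(\alpha)$ in $F^\times/(F^\times)^2$; as this class is represented by a rational number whose square root generates $K$, the character $\chi_M\chi_M'$ cuts out $FK/F$. On the other hand, compatibility of Artin reciprocity with the inclusion $G_F\hookrightarrow G_\Q$ (equivalently with the idelic norm) identifies $\varepsilon_K\circ\Nm_{F/\Q}$ with the restriction $\varepsilon_K|_{G_F}$, which also cuts out $FK/F$; and two quadratic characters of $G_F$ with the same nontrivial kernel coincide. The second identity is the same argument with $F$ and $K$ exchanged: a short computation on $\Gal(\cM/K)$ shows $\chi_L\chi_L'$ has kernel $\langle r^2\rangle$, hence cuts out $FK/K$, which is $\varepsilon_F|_{G_K}=\varepsilon_F\circ\Nm_{K/\Q}$.

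For (2): I would use that restriction of an idele class character along $\A_\Q^\times\hookrightarrow\A_F^\times$ corresponds, under reciprocity, to precomposition of the associated Galois character with the transfer homomorphism $\mathrm{Ver}\colon G_\Q^{\ab}\to G_F^{\ab}$. Computing this transfer for $D_{2\cdot 4}$ and its Klein four-subgroup $\Gal(\cM/F)$ --- for which one only needs $\mathrm{Ver}(g)=g\cdot t^{-1}gt$ when $g$ lies in the subgroup and $\mathrm{Ver}(g)=g^2$ otherwise, $t$ being any element outside it --- one finds that $\mathrm{Ver}$ has kernel $\Gal(\cM/K)$ and image $\langle r^2\rangle$. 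Hence $\chi_M\circ\mathrm{Ver}$ and $\chi_M'\circ\mathrm{Ver}$ are both the quadratic character of $G_\Q$ with kernel $\Gal(\cM/K)$, i.e.\ $\varepsilon_K$; exchanging $F$ and $K$ gives the statement for $\chi_L,\chi_L'$. (Alternatively this drops out of part (3), using $\det\Ind_F^\Q\chi_M=\varepsilon_F\cdot(\chi_M\circ\mathrm{Ver})$ and the fact that the determinant of the two-dimensional irreducible representation of $D_{2\cdot 4}$ is the character cutting out the third quadratic subfield $K'$ of $FK$.)

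For (3): each of $\Ind_F^\Q\chi_M$ and $\Ind_K^\Q\chi_L$ is two-dimensional, factors through $G$ because the inducing character is trivial on the normal subgroup $\Gal(\Qb/\cM)$ of $G_\Q$, and is irreducible by Mackey's criterion since $\chi_M^\sigma=\chi_M'\neq\chi_M$ and $\chi_L^\sigma=\chi_L'\neq\chi_L$ when $\sigma$ generates $\Gal(F/\Q)$, respectively $\Gal(K/\Q)$. Since $D_{2\cdot 4}$ has a unique irreducible two-dimensional representation, the two induced representations are isomorphic; alternatively the Frobenius character formula gives directly that both induced characters take the value $2$ at $1$, $-2$ at $r^2$, and $0$ on every other conjugacy class. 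Throughout, the conceptual content is minimal --- these statements are recorded as Proposition~3.2 of \cite{DRZ} --- and the only real obstacle is the bookkeeping of pinning down $L$, $L'$ inside $\cM$ and of the transfer homomorphism in (2).
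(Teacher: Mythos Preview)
Your argument is correct. The only thing to note is that the paper does not actually supply a proof of this lemma at all: it simply records the three statements and refers the reader to Proposition~3.2 of \cite{DRZ}. Your proposal therefore goes well beyond what the paper does, giving a complete and self-contained verification by transporting everything to the finite group $G=\Gal(\cM/\Q)\cong D_{2\cdot4}$. The identification of $\Gal(\cM/F)$ and $\Gal(\cM/K)$ with the two Klein four-subgroups, the Kummer-theoretic computation of $\chi_M\chi_M'$ in (1), the transfer calculation in (2), and the appeal to the uniqueness of the two-dimensional irreducible representation of $D_{2\cdot4}$ in (3) are all accurate; your alternative route to (2) via $\det\Ind_F^\Q\chi_M=\varepsilon_F\cdot(\chi_M\circ\mathrm{Ver})$ is also valid and pleasant. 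In short, where the paper defers to \cite{DRZ}, you have reproduced (presumably) the content of that reference.
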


Let $\mathfrak d_{L/K}$ denote the discriminant of the extension $L/K$, which by the conductor-discriminant formula is the conductor of $\chi_L$.

\begin{lemma}\label{lemma: new}
  \begin{enumerate}
  \item   There exists an ideal $\fN_{\psi,L}\subset \cO_K$ of norm $N_\psi$ canonically attached to $(\psi,L)$. In particular, all primes dividing $N_\psi$ are split in $K$.
\item One may write $\mathfrak d_{L/K}=c \cdot \fN_{\psi,L},$
were $c$ belongs to $ \Z$ and is coprime to $N$.
  \end{enumerate}
\end{lemma}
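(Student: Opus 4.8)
The plan is to compute the conductor $\mathfrak d_{L/K}=\mathfrak f(\chi_L)$ one rational prime $\ell$ at a time, regarding $\chi_L$ as a finite order character of $\A_K^\times/K^\times$ and exploiting the two identities from Lemma~\ref{lemma: diagram properties}: the factorization $\chi_L\cdot\chi_L'=\varepsilon_F\circ\Nm_{K/\Q}$ and the restriction formula $\chi_L|_{\A_\Q^\times}=\varepsilon_F=\psi$. I will use throughout that $N_\psi$ is squarefree, hence odd (Remark~\ref{remark: N}), that $\disc(K)$ is prime to $N$, and that $\disc(M/F)$ is prime to $\fN_E=N_0\cO_F$, so that $M/F$ and $M'/F$ are unramified above every rational prime dividing $N_0$.

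The first and key step is to show that every prime $\ell\mid N_\psi$ splits in $K$, and to single out a prime above it. Such an $\ell$ is odd and ramifies in $F/\Q$, so $\psi_\ell$ is the nontrivial quadratic character of $\Z_\ell^\times$; also $\ell\nmid\disc(K)$, so $\ell$ is unramified in $K$. If $\ell$ were inert, then at the prime $v\mid\ell$ of $K$ the component $\chi_{L,v}$ is a quadratic character of $K_v^\times$ with $K_v/\Q_\ell$ unramified: if $\chi_{L,v}$ is ramified it is tame and factors through the residue field as the unique quadratic character of $\F_{\ell^2}^\times$, which is trivial on $\F_\ell^\times$ because $\ell$ is odd (so $\F_\ell^\times\subseteq(\F_{\ell^2}^\times)^2$); and if $\chi_{L,v}$ is unramified it is trivial on units; so in either case $\chi_{L,v}$ is trivial on $\Z_\ell^\times$, contradicting that $\chi_L|_{\A_\Q^\times}=\psi$ is ramified at $\ell$. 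Hence $\ell\cO_K=\fp_\ell\fp_\ell'$ with $K_{\fp_\ell}=K_{\fp_\ell'}=\Q_\ell$, and $\chi_{L,\fp_\ell}|_{\Z_\ell^\times}\cdot\chi_{L,\fp_\ell'}|_{\Z_\ell^\times}=\psi_\ell|_{\Z_\ell^\times}\neq 1$; since $\ell$ is odd each factor is either trivial or the nontrivial quadratic character of $\Z_\ell^\times$, so exactly one of the two primes above $\ell$ is ramified in $L/K$, and necessarily with conductor exponent $1$. Calling it $\mathfrak q_\ell$, I would set $\fN_{\psi,L}:=\prod_{\ell\mid N_\psi}\mathfrak q_\ell$; as the $\mathfrak q_\ell$ are distinct primes of residue degree $1$ and $N_\psi$ is squarefree, $\Nm_{K/\Q}(\fN_{\psi,L})=N_\psi$, and $\fN_{\psi,L}$ is plainly determined by $\psi$ (which fixes the $\ell$'s) and by $L$ (through $\chi_L$, which selects $\mathfrak q_\ell$). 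This proves part~(1), including the ``in particular''.

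For part~(2) I must check that the contribution to $\mathfrak f(\chi_L)$ of the primes not dividing $N_\psi$ assembles into a rational ideal $c\cO_K$ with $c\in\Z_{\geq 1}$ prime to $N$. If $\ell\mid N_0$ then $\ell$ is unramified in $F/\Q$ (as $(N_0,N_\psi)=1$) while $M/F$ and $M'/F$ are unramified above $\ell$, so $\cM=MM'$ is unramified above $\ell$ and $\chi_L$ contributes nothing there. If $\ell\nmid N\disc(K)$ and $\ell$ is split, say $\fp\fp'$, then since $\psi$ is unramified at $\ell$ the restriction formula forces $\chi_{L,\fp}|_{\Z_\ell^\times}=\chi_{L,\fp'}|_{\Z_\ell^\times}$, whence $\fp$ and $\fp'$ occur in $\mathfrak f(\chi_L)$ with the same exponent and contribute a rational ideal $\ell^a\cO_K$ (the inert case being rational at once). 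If $\ell\mid\disc(K)$ with $\ell$ odd, then $\ell\cO_K=\mathfrak l^2$ and a ramified $\chi_{L,\mathfrak l}$ would be tame, hence factor nontrivially through the residue field $\F_\ell^\times$; but $\Z_\ell^\times$ surjects onto $\F_\ell^\times$ and $\chi_{L,\mathfrak l}$ is trivial on $\Z_\ell^\times$ (again by $\psi$ unramified at $\ell$), a contradiction — so $\chi_{L,\mathfrak l}$ is unramified. Combining with part~(1) yields $\mathfrak d_{L/K}=c\cdot\fN_{\psi,L}$ with $c\in\Z_{\geq 1}$ prime to $N$; as a cross-check one can confront this with the identity $N_\psi\cdot\Nm_{F/\Q}(\mathfrak d_{M/F})=|\disc(K)|\cdot\Nm_{K/\Q}(\mathfrak d_{L/K})$, which follows from $\Ind_F^\Q\chi_M=\Ind_K^\Q\chi_L$ (Lemma~\ref{lemma: diagram properties}(3)) via the conductor formula for induced representations.

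The only case in which the residue-field argument does not go through is $\ell=2$ when $2\mid\disc(K)$ — which, since $\disc(K)$ is prime to $N$, forces $N$ to be odd and $2\nmid N$. There $K_{\mathfrak l}/\Q_2$ is wildly ramified and a quadratic character may a priori have conductor exponent $2$ or $3$; I expect this to be the main obstacle. What is needed is that the conductor exponent of $\chi_{L,\mathfrak l}$ is $0$ or $2$, so that its contribution to $\mathfrak d_{L/K}$ is $\cO_K$ or $2\cO_K$ and hence rational; I would establish this by a direct $2$-adic computation, using that $\chi_{L,\mathfrak l}$ is trivial on $\Z_2^\times$ together with the constraints the hypotheses impose on the ramification of $M/F$ (and hence of $\cM$) above $2$.
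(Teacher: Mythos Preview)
Your proof follows the same basic strategy as the paper's: a prime-by-prime analysis of $\fd_{L/K}=\mathfrak f(\chi_L)$ using the restriction identity $\chi_L|_{\A_\Q^\times}=\psi$ from Lemma~\ref{lemma: diagram properties}(2). The packaging differs in two places. For part~(1) you rule out the inert case at $\ell\mid N_\psi$ by a direct residue-field argument (the quadratic character of $\F_{\ell^2}^\times$ is trivial on $\F_\ell^\times$), whereas the paper first invokes the conductor identity $N_\psi\cdot\Nm_{F/\Q}(\fd_{M/F})=|\disc(K)|\cdot\Nm_{K/\Q}(\fd_{L/K})$ (from $\Ind_F^\Q\chi_M=\Ind_K^\Q\chi_L$) to get $N_\psi\mid\Nm_{K/\Q}(\fd_{L/K})$, and then uses squarefreeness of $N_\psi$ to force the inert case out; after that both proofs use the same local argument to single out exactly one prime above each $\ell\mid N_\psi$. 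For the coprimality of $c$ with $N_0$ in part~(2) you argue directly that $\cM/\Q$ is unramified above $\ell\mid N_0$ (since $M/F$, $M'/F$, and $F/\Q$ all are), hence so is $L/K$; the paper instead bounds $c\mid\Nm_{F/\Q}(\fd_{M/F})$ via the conductor identity. Both routes are valid; yours is more hands-on and uses the induction identity only as a cross-check rather than as the main engine.

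The one place your proof is incomplete is the case you flag yourself: $2\mid\disc(K)$. You are right that the tame argument breaks down and that what is needed is evenness of the conductor exponent at the ramified prime $\mathfrak l\mid 2$ (any even exponent would do, not only $0$ or $2$). It is worth observing that the paper's own treatment of the ramified case is no more complete at $p=2$: it asserts that the natural map $\Z_p^\times/(1+p^e\Z_p)\to\cO_{K,\fp}^\times/(1+\fp^e)$ is an isomorphism when $p$ is ramified in $K$, but for $e\ge 2$ this map has nontrivial kernel $(1+p^{\lceil e/2\rceil}\Z_p)/(1+p^e\Z_p)$, and $e\le 1$ is automatic only for odd $p$. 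So this $2$-adic case is a genuine loose end in both arguments rather than a defect of your approach relative to the paper's.
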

\begin{proof}
  From the equality $\operatorname{Ind}_F^\Q\chi_M =\operatorname{Ind}_K^\Q\chi_L$, using the formula for the conductor of induced representations and the conductor-discriminant formula, we obtain
  \begin{align}\label{eq: relation between discriminants}
    N_\psi \cdot \Nm_{F/\Q}(\fd_{M/F})=\disc(K)\cdot \Nm_{K/\Q}(\fd_{L/K}).
  \end{align}
From this it follows that $N_\psi\mid \Nm_{K/\Q}(\fd_{L/K})$, since $N$ is coprime to $\disc(K)$. If we write 
\begin{align*}
  \fd_{L/K} = \prod_{\substack{
   \mathfrak p \mid\fd_{L/K} \\
   \mathfrak p\nmid N_\psi
  }} \mathfrak{p}^{e_\mathfrak{p}}   \cdot  \prod_{\substack{
   \mathfrak p \mid\fd_{L/K} \\
   \mathfrak p\mid N_\psi
  }} \mathfrak{p}^{e_\mathfrak{p}}
\end{align*}
then we have (by setting $p_{\mathfrak p}=\mathfrak p\cap\Z$):
\begin{align*}
 \Nm_{K/\Q} (\fd_{L/K}) = \prod_{\substack{
   \mathfrak p \mid\fd_{L/K} \\
   \mathfrak p\nmid N_\psi
  }} p_\fp^{e_\mathfrak{p}'}   \cdot  \prod_{\substack{
   \mathfrak p \mid\fd_{L/K} \\
   \mathfrak p\mid N_\psi
  }} p_\fp^{e_\mathfrak{p}'},\ \  \text{ and }\ \  N_\psi = \prod_{\substack{
   \mathfrak p \mid\fd_{L/K} \\
   \mathfrak p\mid N_\psi
  }} \mathfrak{p}^{e_\mathfrak{p}'}.
\end{align*}
Thanks to our running assumption that $(N,\disc(K))=1$, for every $\mathfrak p\mid N_\psi$ we either have that $\Nm_{K/\Q}(\mathfrak p) = p_\mathfrak{p}$ when $p_\mathfrak{p}$ is split (in which case $e_{\mathfrak p}'= e_{\mathfrak{p}}$), or that $\Nm_{K/\Q}(\mathfrak{p})=p_{\mathfrak{p}}^2$ when $p_\mathfrak{p}$ is inert (in which case $e_{\mathfrak p}'= 2e_{\mathfrak{p}}$). Since $N_\psi$ is squarefree (see Remark \ref{remark: N}), it must be $e_\mathfrak{p}'=1$ so that $e_\mathfrak{p}=1$ and $p_\mathfrak{p}$ is split. Therefore
\begin{align*}
  N_\psi = \Nm_{K/\Q} ( \prod_{\substack{
   \mathfrak p \mid\fd_{L/K} \\
   \mathfrak p\mid N_\psi
  }} \mathfrak{p}),
\end{align*}
which proves the first assertion by putting $\fN_{\psi,L}= \prod_{\mathfrak{p}\mid\fd_{L/K},\mathfrak{p}\mid N_\psi}\mathfrak{p}$.

As for the second part of the lemma, we consider primes $p$ dividing $\Nm_{K/\Q}(\fd_{L/K})$. Suppose first that $p\mid N_\psi$. By the first part $p$ splits in $K$, say as $p\cO_K=\fp\bar\fp$. Then $\cO_{K,\fp}\simeq \Z_p$, and by part \eqref{item2} of Lemma \ref{lemma: diagram properties} the  composition 
  \begin{align*}
    \Z_p^\times\lra \cO_{K,\fp}^\times \times \cO_{K,\bar\fp}^\times \stackrel{\chi_{L,\fp}\cdot \chi_{L,\bar\fp}}{\lra} \{\pm 1\}
  \end{align*}
is equal to $\psi_p$ (the local component of $\psi$ at $p$), which is non trivial because $p\mid N_\psi$. But since $\chi_{L,\fp}$, $\chi_{L,\bar\fp}$ are quadratic characters, then necessarily exactly one them is trivial on $\F_p^\times\simeq \cO_{K,\fp}^\times/(1+\fp)\simeq \cO_{K,\bar\fp}^\times/(1+\bar\fp)$, say $\chi_{L,\bar\fp}$. Then $\fp$ divides exactly the conductor of $\chi_L$ (which is equal to $\fd_{L/K}$), and $\bar\fp$ does not divide it.

Now suppose that $p\nmid N_\psi$. That is to say, $\psi_p$ is trivial on $\Z_p^\times$. Let $\fp\mid p$ be a prime in $K$ such that $\fp^e$ divides exactly the conductor of $\chi_L$. The composition 
  \begin{align*}
    \Z_p^\times\lra \cO_{K,\fp}^\times \stackrel{\chi_{L,\fp}}{\lra} \{\pm 1\}
  \end{align*}
is equal to $\psi_p$, which is trivial on $\Z_p^\times$. If $p$ was ramified in $K$, then the above map would restrict to
\begin{align*}
      \Z_p^\times/(1+p^e\Z_p)\stackrel{\simeq}{\lra} \cO_{K,\fp}^\times/(1+\fp^e\cO_{K,\fp}) \stackrel{\chi_{L,\fp}}{\lra} \{\pm 1\},
\end{align*}
contradicting the non-triviality of $\chi_{L,\fp}$ restricted to $\cO_{K,\fp}^\times/(1+\fp^e\cO_{K,\fp})$. Hence we see that $p$ cannot ramify. If $p$ is inert in $K$ there is nothing to prove, because $\fp^e=p^e\cO_K$ is already a rational ideal. If $p$ splits in $K$, say as $p\cO_K=\fp\bar\fp$, then $\chi_{L,\bar\fp}\simeq \chi_{L,\fp}^{-1}$ on $\Z_p^\times$, implying that $\bar\fp^e$  exactly divides the conductor of $\chi_L$, because $\cO_{K,\bar\fp}\simeq\Z_p\simeq \cO_{K,\fp}$. 

We have seen that $\fd_{L/K}=c\cdot \fN_{\psi,L}$ with $c\in\Z$. It remains to prove that $c$ is coprime to $N$. Recall that $N = N_\psi N_0$, where $N_0$ is a generator of the conductor $\fN_E$ of $E$. From what we have seen in the proof so far, it is clear that $(c,N_\psi)=1$. Recall also our running assumption that $\fN_E$ is coprime to $\fd_{M/K}$ (cf. \S\ref{subsection: ATR}), which implies that $(N_0,\Nm_{F/\Q}(\fd_{M/K}))=1$. From \eqref{eq: relation between discriminants} we see that $c\mid \Nm_{F/\Q}(\fd_{M/F})$, and therefore $(c,N_0)=1$.

\end{proof}
The Heegner points that we will use in our construction arise from a Shimura curve associated to an indefinite algebra of discriminant $N^-$. Therefore, the following result is key to our purposes.
\begin{proposition}\label{prop: primes dividing N minus}
 The number of primes dividing $N^-$ is even.
\end{proposition}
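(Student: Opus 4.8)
The plan is to compare, modulo $2$, the number of prime divisors of $N^-$ with the cardinality of the set
\[
S=\{\mathfrak{p}\mid \fN_E\colon \mathfrak{p}\text{ is inert in }M/F\},
\]
which is even by the running assumption that $L(E/M,s)$ has sign $-1$ (recalled in \S\ref{subsection: ATR}). Since $\fN_E=N_0\cO_F$, the set $S$ is the disjoint union, over the rational primes $\ell\mid N_0$, of the primes of $F$ above $\ell$ that are inert in $M$; and, by construction of the factorization $N=N^+N^-$, the number of prime divisors of $N^-$ equals $\#\{\ell\mid N_0\colon \ell\text{ is inert in }K\}$. So it suffices to prove that, for each $\ell\mid N_0$, the number of primes of $F$ above $\ell$ that are inert in $M$ is odd when $\ell$ is inert in $K$ and even when $\ell$ is split in $K$; summing this over $\ell\mid N_0$ then gives $\#S\equiv\#\{\ell\mid N_0\colon\ell\text{ inert in }K\}\pmod 2$, and the left-hand side is even.

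For the local count, first observe that $\ell$ is unramified in $F$ (the primes ramifying in $F$ divide $N_\psi=N_F$, whereas $\ell\mid N_0$ and $(N_0,N_\psi)=1$), that $\ell$ is unramified in $K$ (since $(N,\disc(K))=1$), and that every prime of $F$ above $\ell$ is unramified in $M/F$ (since $\disc(M/F)$ is prime to $\fN_E$); in particular $\chi_M$ and $\chi_M'$ are unramified at these primes and the relevant splitting types make sense. If $\ell$ splits in $F$, write $\ell\cO_F=\mathfrak{p}\mathfrak{p}'$; by Remark~\ref{rk: M and M'} we have $\chi_M(\mathfrak{p}')=\chi_M'(\mathfrak{p})$, and Lemma~\ref{lemma: diagram properties}(\ref{item1}) gives
\[
\chi_M(\mathfrak{p})\chi_M(\mathfrak{p}')=\chi_M(\mathfrak{p})\chi_M'(\mathfrak{p})=\varepsilon_K\big(\Nm_{F/\Q}(\mathfrak{p})\big)=\varepsilon_K(\ell),
\]
so the two primes above $\ell$ are either both inert in $M$ or both split when $\ell$ is split in $K$ (contributing $0$ or $2$ to $\#S$), and exactly one of them is inert when $\ell$ is inert in $K$ (contributing $1$). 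If instead $\ell$ is inert in $F$, then $\mathfrak{p}:=\ell\cO_F$ is the unique prime above $\ell$; here I would evaluate $\chi_M(\mathfrak{p})$ by taking $\ell$ itself as a uniformizer of $F_{\mathfrak{p}}$, so that the idele which is a uniformizer at $\mathfrak{p}$ and trivial at all other places is precisely the image under $\A_\Q^\times\hookrightarrow\A_F^\times$ of the rational idele that is $\ell$ at the place $\ell$ and $1$ elsewhere; Lemma~\ref{lemma: diagram properties}(\ref{item2}) then yields $\chi_M(\mathfrak{p})=\varepsilon_K(\ell)$, so the single prime above $\ell$ is inert in $M$ precisely when $\ell$ is inert in $K$. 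In both cases the contribution of $\ell$ to $\#S$ has the parity of $\tfrac{1-\varepsilon_K(\ell)}{2}$, which is exactly what was needed.

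The one step requiring care is the inert-in-$F$ case: one must check that the global statement $\chi_M|_{\A_\Q^\times}=\varepsilon_K$ genuinely determines the unramified local value $\chi_M(\mathfrak{p})$ at $\mathfrak{p}=\ell\cO_F$, which comes down to the elementary fact that $\ell$ simultaneously serves as a uniformizer of $F_{\mathfrak{p}}$ and as (the image of) a uniformizer of $\Q_\ell$. Everything else is bookkeeping with Diagram~\eqref{eq: diagram} and Lemma~\ref{lemma: diagram properties}; once the parity identity $\#S\equiv\#\{\ell\mid N_0\colon \ell\text{ inert in }K\}\pmod 2$ is in place, the evenness of $\#S$ coming from the sign hypothesis immediately gives the claim.
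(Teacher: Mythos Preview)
Your proof is correct and follows essentially the same approach as the paper: both reduce to a prime-by-prime parity count against the set $S=\{\mathfrak{p}\mid\fN_E:\mathfrak{p}\text{ inert in }M\}$, split each $\ell\mid N_0$ into the cases $\ell$ split or inert in $F$, and apply parts~(\ref{item1}) and~(\ref{item2}) of Lemma~\ref{lemma: diagram properties} together with Remark~\ref{rk: M and M'} exactly as you do. The only cosmetic difference is that the paper packages the casework as two separate claims (primes dividing $N_0^+$ contribute $0$ or $2$, primes dividing $N^-$ contribute exactly $1$), whereas you phrase it uniformly as a parity statement; your extra sentence justifying the inert-in-$F$ case via the shared uniformizer is a point the paper leaves implicit.
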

\begin{proof} 
Recall that $\fN_E=N_0 \cO_F$ and that the set 
\begin{align}\label{eq: primes inert 2}
 \{\p \mid \fN_E\colon \p \text{ is inert in } M\}
\end{align}
has even cardinality thanks
to our running assumption that $L(E/M,s)$ has sign $-1$. Every prime in the set \eqref{eq: primes inert 2} is above a prime $p\mid N_0$. Thus, in order to prove the proposition it is enough to prove the  following claims.
\begin{claim}
  Every prime $p\mid N_0^+$ gives rise to either zero or two primes in  \eqref{eq: primes inert 2}.
\end{claim}
\begin{claim}
  Every prime $p\mid N_0^-=N^-$ gives rise to exactly one prime in  \eqref{eq: primes inert 2}.
\end{claim}
\subsubsection*{Proof of Claim 1}Let $p$ be a prime dividing $N_0^+$. Namely, $p$ is a prime divisor of $N_0$ that splits in $K$. Observe that $p$ can not ramify in $F$ because of our assumption that $(N_\psi,N_0)=1$. If $p$ splits in $F$,
say $p \cO_F=\fp\fp'$, by Remark \ref{rk: M and M'} and part (1) of Lemma \ref{lemma: diagram properties} we see that
\[
 \chi_M(\fp) \cdot \chi_M(\fp')=\chi_M(\fp) \cdot \chi_M'(\fp)=\varepsilon_K(\operatorname{Nm}_\Q^F(\fp))=\varepsilon_K(p)=1,
\]
so that either both $\fp$ and $\fp'$ are inert in $M$, or both are split. In other words, either $\fp$ and $\fp '$ belong to \eqref{eq: primes inert 2}, or none of them does. 

If $p$ remains inert in $F$, by part (2) of Lemma \ref{lemma: diagram properties} we have that
\[
 \chi_M(p \cO_F)=\varepsilon_K(p)=1,
\]
which means that $p \cO_F$ is split in $M$, so that it does not belong to \eqref{eq: primes inert 2}.

\subsubsection*{Proof of Claim 2} Let  $p$ be a prime dividing $N_0^-= N^-$.  Again there are two possibilities. 
\begin{enumerate}
\item If $p$ is split in $F$, say $p\cO_F=\fp\fp'$, then
by part (1) of Lemma \ref{lemma: diagram properties} we have that
\[
\chi_M(\fp) \chi_M(\fp')= \chi_M(\fp) \chi_M'(\fp)=\varepsilon_K(\operatorname{Nm}_\Q^F(\fp))=\varepsilon_K(p)=-1,
\]
so exactly one of the primes above $p$ is inert in $M$ and therefore belongs to \eqref{eq: primes inert 2}.
\item   If $p$ is inert in $F$, then by part (2) of Lemma \ref{lemma:
diagram properties} we see that
\[
  \chi_M(p \cO_F)=\varepsilon_K(p)=-1,
\]
and so $p\cO_F$ is inert in $M$. 
\end{enumerate}

\end{proof}

\subsection{The field $L_c$}\label{subsec: Extensions of L} The aim of this subsection is to define an extension $L_c$ of $L$, associated to $\psi$ and to the order of conductor $c$ in $K$.  It will turn out to be the field of definition of the Heegner points that we will consider in Section \ref{section: CM points on Shimura curves with
quadratic character}. 

Recall that from Lemma  \ref{lemma: new} the discriminant
of $L/K$ factors as 
\[
 \mathfrak d_{L/K}=c \cdot \fN_{\psi,L},
\]
where $c$ is a rational integer with $(c,N)=1$ and $\fN_{\psi,L}$ is an ideal in $K$ of norm $N_\psi$. Let $N^+=N_\psi N_0^+$ and let  $\fN^+$ be an ideal of $K$ of norm $N^+$, such that $\fN_{\psi,L}\mid \fN^+$. We remark that $\fN_{\psi,L}$ is determined by $(\psi,L)$, while there is some freedom in the choice of $\fN^+$. We denote by $\bar\fN^+$ its complex conjugate.

Let $H_c/K$ be the ring class field of $K$ of conductor $c$. Denote by $\A_K$ the adeles of $K$, and by $\hat\cO_K=\prod_\fp \cO_{K,\fp}\subset \A_{K,\text{fin}}$.  The reciprocity
map of class field theory provides an identification $\Gal(H_c/K)\simeq \A_K^\times/(K^\times U_c)$, where
\[
 U_c=\hat \Z^\times (1+c\hat\cO_K)\C^\times\subset \A_K^\times.
\]
For an idele $\alpha=\prod_\fp\alpha_\fp$ and an ideal $\mathfrak m$, we denote by $(\alpha)_\mathfrak{m}$ the product $\prod_{\fp\mid \mathfrak m}\alpha_\fp$. Following \cite[\S 4.1]{DRZ} we define
\[
 U_c^0=\{\alpha\in U_c\colon (\alpha)_{\fN^+}\in \ker(\psi)\subset (\Z/N^+\Z)^\times\},
\]
\[
 \bar U_c^0=\{\alpha\in  U_c\colon (\alpha)_{\bar\fN^+}\in \ker(\psi)\subset (\Z/N^+\Z)^\times\}.
\]
Here we are using the fact that $\fN^+$ has norm $N^+$, so that we have isomorphisms 
\[
\cO_{K,\fN^+}^\times/(1+
{\fN^+} \cO_{K,\fN^+})\simeq (\Z/N^+\Z)^\times,
\]
\[
\cO_{K,\bar\fN^+}^\times/(1+
{\bar\fN^+} \cO_{K,\bar\fN^+})\simeq (\Z/N^+\Z)^\times,
\]
where the notation $\cO_{K,\fN^+}$ stands for $\prod_{\fp\mid \fN^+}\cO_{K,\fp}$ and similarly for $\cO_{K,\bar\fN^+}$.

 Let $L_c$ and $L_c'$ be the fields corresponding by class field theory to $U_c^0$ and $\bar U_c^0$ respectively. That is to
say
\begin{equation}\label{eq: def of L_c}
 \Gal(L_c/K)\simeq \A_K^\times/(K^\times U_c^0), \ \ \Gal(L_c'/K)\simeq \A_K^\times/(K^\times \bar U_c^0).
\end{equation}
Both $L_c$ and $L'_c$ are quadratic extensions of $H_c$, and we denote by $\tilde H_c$ the biquadratic extension of $K$
given by $\tilde H_c=L_c L_c'$.

\begin{lemma}\label{lemma: L is contained in L_c}
 If $c$ is the one given by Lemma \ref{lemma: new}, then $L$ is contained in $L_c$.
Therefore $LL'$ is contained in $\tilde H_c$.
\end{lemma}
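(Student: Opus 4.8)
The statement to prove is that $L\subseteq L_c$, where $L_c$ is the class field attached to $U_c^0$ and $c$ is the specific rational integer from Lemma~\ref{lemma: new}. By Galois theory this is equivalent to the inclusion $U_c^0\subseteq N_{L/K}(\A_L^\times)$ of subgroups of $\A_K^\times$; equivalently, writing $\chi_L$ for the quadratic character cutting out $L$, it suffices to show that $\chi_L$ is trivial on $U_c^0$. So the plan is: (i) describe $\ker\chi_L$ locally in terms of the conductor $\fd_{L/K}=c\cdot\fN_{\psi,L}$; (ii) recall that $U_c^0=\{\alpha\in U_c : (\alpha)_{\fN^+}\in\ker\psi\}$ where $U_c=\hat\Z^\times(1+c\hat\cO_K)\C^\times$; and (iii) check place-by-place that every $\alpha\in U_c^0$ lies in $\ker\chi_L$.

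\textbf{Key steps.} First I would split the local analysis according to the type of prime $\fp$ of $K$. At the archimedean place $\chi_L$ is trivial on $\C^\times$ since $L/K$ is an extension of the imaginary field $K$ (both completions are $\C$), so the $\C^\times$ factor of $U_c$ causes no trouble. At a prime $\fp\nmid\fd_{L/K}$, the character $\chi_{L,\fp}$ is unramified, hence trivial on $\cO_{K,\fp}^\times$, and the $\fp$-component of any $\alpha\in U_c$ lies in $\cO_{K,\fp}^\times$; done. At a prime $\fp\mid c$ (so $\fp$ lies over a rational prime $p\mid c$, with $(c,N)=1$ hence $p\nmid N$): by the analysis in the proof of Lemma~\ref{lemma: new}, $p$ is either inert in $K$ or split, and in the split case $c$ is divisible by both $\fp$ and $\bar\fp$ to the same exponent; the key input is that $U_c$ contains $1+c\hat\cO_K$, so the $\fp$-component of $\alpha\in U_c$ lies in $1+c\cO_{K,\fp}$, and since $\fp^{v_\fp(c)}\,\|\,\fd_{L/K}$ the conductor of $\chi_{L,\fp}$ divides $c\cO_{K,\fp}$, so $\chi_{L,\fp}$ kills $1+c\cO_{K,\fp}$. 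Here one uses that $\alpha\in U_c$ forces $(\alpha)_\fp\in\hat\Z^\times\cdot(1+c\cO_{K,\fp})$, and $\hat\Z^\times$ maps into $\Z_p^\times\subset\cO_{K,\fp}^\times$; one then invokes part~\eqref{item2} of Lemma~\ref{lemma: diagram properties}, namely that $\chi_L|_{\A_\Q^\times}=\varepsilon_F=\psi$, to control $\chi_{L,\fp}$ on the $\Z_p^\times$ part --- but $\psi$ is unramified at $p$ since $p\nmid N_\psi$, so that part is also trivial. Finally, at a prime $\fp\mid\fN_{\psi,L}$ (so $\fp$ lies over $p\mid N_\psi$, split in $K$ as $p\cO_K=\fp\bar\fp$ with, say, $\chi_{L,\bar\fp}$ unramified and $\fp$ exactly dividing $\fd_{L/K}$, as established in the proof of Lemma~\ref{lemma: new}): here the local component of $\alpha\in U_c$ at $\fp$ and $\bar\fp$ lies in $\hat\Z^\times_p = \Z_p^\times$ diagonally (up to the $1+c\hat\cO_K$ part, which is $\equiv 1$ mod $p$ since $p\nmid c$), and the product $\chi_{L,\fp}\cdot\chi_{L,\bar\fp}$ restricted to $\Z_p^\times$ equals $\psi_p$ by Lemma~\ref{lemma: diagram properties}\eqref{item2}; so $\chi_{L,\fp}(\alpha_\fp)\chi_{L,\bar\fp}(\bar\alpha_{\bar\fp})$ is $\psi$ evaluated on $(\alpha)_{\fN^+}$ --- and this is exactly where the defining condition $(\alpha)_{\fN^+}\in\ker\psi$ of $U_c^0$ gets used, making the contribution trivial. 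Assembling the contributions over all places gives $\chi_L(\alpha)=1$ for all $\alpha\in U_c^0$, hence $L\subseteq L_c$. The final sentence, that $LL'\subseteq\tilde H_c = L_cL_c'$, follows by the symmetric argument applied to $\chi_L'$ and $\bar U_c^0$ (swapping the roles of $\fN^+$ and $\bar\fN^+$), together with $\tilde H_c=L_cL_c'$.

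\textbf{Main obstacle.} The bookkeeping at the primes dividing $N_\psi$ is the delicate point: one must track precisely \emph{which} of the two primes $\fp,\bar\fp$ above $p$ divides $\fd_{L/K}$ (equivalently carries the ramification of $\chi_L$), and match it up with the ideal $\fN^+$ chosen so that $\fN_{\psi,L}\mid\fN^+$, so that the condition $(\alpha)_{\fN^+}\in\ker\psi$ is literally the condition needed to kill $\chi_{L,\fp}(\alpha_\fp)$. The compatibility is guaranteed by our having \emph{defined} $\fN_{\psi,L}=\prod_{\fp\mid\fd_{L/K},\,\fp\mid N_\psi}\fp$ in Lemma~\ref{lemma: new}, but one has to state this matching carefully; a secondary subtlety is checking that the $\hat\Z^\times$ and $1+c\hat\cO_K$ factors of $U_c$ do not contribute spuriously, which reduces to the observation that $\psi$ (equivalently $\varepsilon_F$) is unramified away from $N_\psi$ and that $\fd_{L/K}$ at primes dividing $c$ is a power of a rational prime dividing $c$. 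None of this is deep, but it is exactly the kind of place-by-place verification where an index error would invalidate the argument, so it deserves care.
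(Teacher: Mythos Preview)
Your approach is essentially the same as the paper's: reduce to showing $\chi_L$ is trivial on $U_c^0$, and verify this using the factorization $\fd_{L/K}=c\cdot\fN_{\psi,L}$ together with $\chi_L|_{\A_\Q^\times}=\varepsilon_F=\psi$ from Lemma~\ref{lemma: diagram properties}. The paper carries this out by writing $\alpha=a(1+cx)$ globally and tracking where $\chi_L$ can be replaced by $\psi$; you organize it as a place-by-place case analysis. Both routes are the same argument.

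There is one inaccuracy in your treatment at primes $\fp\mid\fN_{\psi,L}$ that you should correct. You write that the $(1+c\hat\cO_K)$ part is ``$\equiv 1\pmod p$ since $p\nmid c$'' and hence that $\alpha_\fp,\alpha_{\bar\fp}$ lie in $\Z_p^\times$ \emph{diagonally}. This is false: when $p\nmid c$, the factor $(1+cx)_\fp$ is an arbitrary element of $\cO_{K,\fp}^\times$, not congruent to $1$ modulo $\fp$, so the components $\alpha_\fp\in\cO_{K,\fp}^\times\simeq\Z_p^\times$ and $\alpha_{\bar\fp}\in\cO_{K,\bar\fp}^\times\simeq\Z_p^\times$ are \emph{independent} local units. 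Fortunately this does not damage the argument, because you already know $\chi_{L,\bar\fp}$ is unramified, so $\chi_{L,\bar\fp}(\alpha_{\bar\fp})=1$ regardless of what $\alpha_{\bar\fp}$ is; and combining this with $\chi_{L,\fp}\cdot\chi_{L,\bar\fp}=\psi_p$ on $\Z_p^\times$ gives $\chi_{L,\fp}=\psi_p$ on $\cO_{K,\fp}^\times\simeq\Z_p^\times$, whence $\chi_{L,\fp}(\alpha_\fp)=\psi_p((\alpha)_\fp)$. That is exactly the local contribution to $\psi((\alpha)_{\fN^+})$, and the defining condition of $U_c^0$ finishes the job. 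So drop the diagonal claim and argue directly via the unramifiedness at $\bar\fp$; with that fix your proof is correct and matches the paper's.
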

\begin{proof}
By class field theory it is enough to show that $U_c^0$ is contained in $\ker\chi_L$. Recall that the conductor of $\chi_L$ is equal to $\mathfrak{d}_{L/K}$ and hence equal to $c \fN_{\psi,L}$, with $\fN_{\psi,L}\mid \fN^+$. This means that $\chi_{L|\widehat\cO_K^\times}$ factors through a character
\begin{align}\label{eq: factor character}
  \chi_L\colon \cO_{K,c\fN^+}^\times/(1+c \fN^+\cO_{K,c \fN^+}) \lra \{\pm 1\}.
\end{align}
Let $(\alpha)$ be a finite idele of $K$ that belongs to $U_c^0$. We aim to see that $\chi_L(\alpha)=1$. Since $\alpha$ belongs to $U_c$, we can write it as $\alpha = a (1+cx)$ for some $a\in \hat \Z^\times$ and some $x\in\hat\cO_K$. Locally, we can express this as
\begin{align*}
  \alpha = a(1+cx)= a\prod_{\fp\nmid c\fN^+}x_\fp \prod_{\fp\mid c} (1+\fp^{v_\fp(c)}x_\fp )\prod_{\fp\mid\fN^+}x_\fp.
\end{align*}
By \eqref{eq: factor character} we see that \[\chi_L\Big(\prod_{\fp\nmid c \fN^+}x_\fp \prod_{\fp\mid c} (1+\fp^{v_\fp(c)}x_\fp ) \Big)=1.\]
Therefore, we see that 
\begin{align*}
  \chi_L(\alpha)= \chi_L \Big(a \prod_{\fp\mid\fN^+}x_\fp \Big)= \chi_L\Big( \prod_{\fp\mid\fN^+}a_\fp x_\fp \Big)\chi_L\Big( \prod_{\fp\mid c}a_\fp \Big).
\end{align*}
Since $\prod_{\fp\mid c}a_\fp$ lies in $\A_\Q^\times$, we have that 
\begin{align*}
  \chi_L\Big( \prod_{\fp\mid\fN^+}a_\fp x_\fp \Big)\chi_L\Big( \prod_{\fp\mid c}a_\fp \Big) =& 
  \psi\Big( \prod_{\fp\mid\fN^+}a_\fp x_\fp \Big)\psi\Big( \prod_{\fp\mid c}a_\fp \Big)\\
= &\psi\big( (\alpha)_{\fN^+} \big)\psi\Big( \prod_{\fp\mid c}a_\fp\Big) = \psi\Big( \prod_{\fp\mid c}a_\fp\Big).
\end{align*}
But $\prod_{\fp\mid c}a_\fp\in \prod_{\fp\mid c}\Z_{\fp_p}^\times$ (where $\fp_p=\fp\cap\Z$). Since the conductor of $\psi$ is $N_\psi$, which is coprime to $c$, we have that
\begin{align*}
  \psi\Big( \prod_{\fp\mid c}a_\fp\Big) = 1,
\end{align*}
as we aimed to show.
\end{proof}

\section{CM points on Shimura curves with quadratic character}\label{section: CM points on Shimura curves with
quadratic character}
In this section we recall some basic facts and well-known properties of Shimura curves. We also introduce the CM points that will play a key role in our construction of points in $E(M)$ later in Section \ref{section: Heegner points on Q-curves}, and we use Shimura's reciprocity law to deduce their field of definition. 

Let $\B/\Q$ be the
quaternion algebra of discriminant $N^-$. Thanks to Proposition \ref{prop: primes dividing N minus} we see that $\B$ is indefinite so we can, and do, fix an isomorphism \[\iota_\infty\colon \B\otimes_\Q
\R\stackrel{\simeq}{\lra} \M_2(\R).\] Choose
$\oR_0=\oR_0(N^+,N^-)$ an Eichler order of level $N^+$ in
$\B$
together with, for every prime $\ell\mid N^+$, an isomorphism\[\iota_\ell\colon
\B\otimes_\Q\Q_\ell\stackrel{\simeq}{\lra} \M_2(\Q_\ell)\]
such that 
\[
 \iota_\ell(\oR_0)\simeq \left\{ \smtx a b c d \in \M_2(\Z_\ell) \colon  c\in \ell \Z_\ell\right\}.
\]
In this way we also obtain an  isomorphism 
\[
\iota_{N^+}\colon \oR_0\otimes \Z_{N^+}\simeq \left\{ \smtx a b c d \in \M_2(\Z_{N^+}) \colon  c\in
N^+ \Z_{N^+}\right\}, 
\]
where $\Z_{N^+}=\prod_{p\mid N^+}\Z_p$.
 Let $\eta\colon \oR_0\ra \Z_{N^+}/N^+\Z_{N^+}$ be the map that sends $\gamma$ to
the upper left entry of $\iota_{N^+}(\gamma)$ taken modulo $N^+$. The character $\psi$ can be regarded in a natural way as a character $\psi \colon
\Z_{N^+}/N^+\Z_{N^+}\ra \{\pm 1\}.$
Let $\cU_0=\oR_0^\times$ be the group of units in $\oR_0$, and define 
\begin{equation}\label{eq: def of U_psi}
 \cU_\psi=\{\gamma\in \cU_0\colon \psi\circ\eta(\gamma)=1\}.
\end{equation}
Let  also $\Gamma_0$ (resp. $\Gamma_\psi$) denote the subgroup of norm $1$ elements in $\cU_0$ (resp. $\cU_\psi$).

\subsection{Shimura curves} Let $X_0=X_0(N^+,N^-)$ be the Shimura curve associated to $\Gamma_0$. Similarly, let  $X_\psi=X_\psi(N^+,N^-)$ be the Shimura curve
associated to $\Gamma_\psi$. See~\cite[Chapitre III]{Boutot-Carayol} for the precise moduli description. They are curves over $\Q$, whose complex points  can  be described as
\begin{equation}\label{eq: shimura curves as quotients of HH}
 X_0(\C)\simeq \Gamma_0\backslash \cH ,
\ \  X_\psi(\C)\simeq\Gamma_\psi\backslash  \cH,
\end{equation}
where $\cH$ denotes the complex upper half plane, and $\Gamma_0$ and $\Gamma_\psi$ act on $\cH$ via $\iota_\infty$. The inclusion $\Gamma_\psi\subset
\Gamma_0$ induces a degree $2$ homomorphism   defined over $\Q$ \[\pi_\psi\colon X_\psi\lra X_0.\]

\subsection{CM points}\label{subsection: CM points} Let $c$ be an integer relatively prime to $N$ and to the discriminant of $K$, and let
$\cO_c=\Z+c \cO_K$ be the order
of conductor $c$ in $K$.  An algebra embedding $\varphi\colon \cO_c\hookrightarrow \oR_0$ is said to be an \emph{optimal
embedding of conductor $c$} if \mbox{$\varphi(K)\cap \oR_0=\varphi(\cO_c)$}. Recall also the ideal $\fN^+\subset K$ of norm $N^+$ that we fixed in \S \ref{subsec: Extensions of L}, and that we denote by $\bar \fN^+$ its complex conjugate.
\begin{definition}
 We say that an optimal embedding $\varphi\colon \cO_c\hookrightarrow \oR_0$ is normalized with respect to $\fN^+$ if it
satisfies that
\begin{enumerate}
 \item $\iota_\infty (\varphi (a))\colvec \tau 1=a \colvec \tau 1$ for all $a\in \cO_c$ and all $\tau\in \C$ (here we view $K\subset\C$); and
\item $\ker(\eta\circ\varphi)=\fN^+$.
\end{enumerate}
We denote by $\cE(c,\oR_0)$ the set of normalized embeddings with respect to $\fN^+$. 
\end{definition}
\begin{remark}
  Observe that we do not impose any normalization at the primes dividing $N^-$ (in particular, the Galois action that we will introduce below on the set of normalized embeddings  will not be transitive). But the involutions at primes dividing the discriminant of the algebra do not play any role for the applications of the present note. 
\end{remark}
The groups $\Gamma_0$ and $\Gamma_\psi$ act on $\cE(c,\oR_0)$ by conjugation, and we denote by $\cE(c,\oR_0)/\Gamma_0$ and
$\cE(c,\oR_0)/\Gamma_\psi$ the corresponding (finite) sets of conjugacy classes. Each $\varphi\in \cE(c,\oR_0)$ has a unique
fixed point $\tau_\varphi$ in $\cH$. The image of $\tau_\varphi$ in $\Gamma_0\backslash\cH\simeq X_0(\C)$
(resp. in $\Gamma_\psi\backslash\cH \simeq X_\psi(\C)$) only depends on the class of $\varphi$ in $\cE(c,\oR_0)/\Gamma_0$
(resp. $\cE(c,\oR_0)/\Gamma_\psi$). We will denote the point defined by $\tau_\varphi$ in the Shimura curve by $[\tau_\varphi]$. The points obtained in this way are the so-called \emph{CM points} or \emph{Heegner points}.

We denote by $\CM_0(c)$ the set of \emph{CM points of conductor $c$} corresponding to optimal embeddings normalized with respect to $\fN^+$. That is to say
\[
 \CM_0(c)=\{ [\tau_\varphi]\in X_0(\C) \colon \varphi\in \cE(c,\oR_0)/\Gamma_0 \}.
\]
Similarly, we denote by $\CM_\psi(c)$ their preimage under $\pi_\psi$, which can be described as
\[
 \CM_\psi(c)=\{ [\tau_\varphi]\in X_\psi(\C) \colon \varphi\in \cE(c,\oR_0)/\Gamma_\psi \}.
\]
From now on we identify $\CM_0(c)$ with $\cE(c,\oR_0)/\Gamma_0$ and $\CM_\psi(c)$  with $\cE(c,\oR_0)/\Gamma_\psi$ (this is possible because the association $[\varphi]\mapsto [\tau_\varphi]$ is injective).
Every element in $\CM_0(c)$ has two preimages in $\CM_\psi(c)$, which are interchanged by the action of any element
$W_\psi\in \Gamma_0\smallsetminus\Gamma_\psi$.

There is an action $\star$ of $\hat K^\times$ on $\cE(c,\oR_0)$, given as follows. For any $x=(x_\fp)_\fp\in \hat K^\times$ and
$\varphi\in \cE(c,\oR_0)$, the fractional ideal $\hat\varphi(x)\hat\oR_0\cap \B$ is principal, say generated by
$\gamma_x\in \B^\times$. Let $a_x=\hat\varphi(x_{\fN^+})^{-1} \gamma_x$. Observe that $a_{x,\fp}\in \oR_0^\times$ for every $\fp\mid \fN^+$, and therefore it makes sense to consider $\psi\circ\eta(a_{x})$. Modifying each $\gamma_{x,\fp}$ by a unit if necessary and by strong approximation we can assume that $\gamma_x$ is chosen in such a way that $\psi\circ \eta(a_x)=1$. That is to say,
$\hat\varphi(x_{\fN^+}
)^{-1} \gamma_x$ lies in the kernel of $\psi\circ\eta$. Then $x\star
\varphi$ is defined as $x\star
\varphi:=\gamma_x^{-1}\varphi\gamma_x$.

By results of Shimura CM points are defined over $K^{ab}$, the maximal abelian extension of $K$. The Galois action on them is
given in terms of the reciprocity map of class field theory \[\operatorname{rec}\colon \hat K^\times/K^\times\lra
\Gal(K^{ab}/K)\] by means of \emph{Shimura's reciprocity law} (cf. \cite[Th. 9.6]{shimura-book}):
\begin{equation}\label{eq: shimura reciprocity law}
 \rec(x)^{-1}([\tau_\varphi])=[\tau_{x\star\varphi}].
\end{equation}
Here the action in the left is the usual Galois action on the $\Qb$-points of a variety defined over $\Q$. One of its well known consequences is that $\CM_0(c)\subset
X_0(H_c)$, i.e. CM points of conductor $c$ on $X_0$ are defined over the ring class field of conductor $c$. One can also derive from it the field of definition of $\CM_\psi(c)$, which is precisely the field $L_c$ defined in \S \ref{subsec: Extensions of L}.
\begin{proposition}\label{prop: field of def of CM points}
 $\CM_\psi(c)\subset X_\psi(L_c)$.
\end{proposition}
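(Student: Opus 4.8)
The plan is to derive the statement from Shimura's reciprocity law \eqref{eq: shimura reciprocity law} together with a direct analysis of the $\star$-action on $\cE(c,\oR_0)$. Recall that CM points lie in $X_\psi(K^{ab})$, and that by \eqref{eq: def of L_c} the subgroup $\Gal(K^{ab}/L_c)$ is $\rec(K^\times U_c^0/K^\times)$. Since $\rec$ is trivial on principal ideles, formula \eqref{eq: shimura reciprocity law} shows that $K^\times$ acts on $\cE(c,\oR_0)$ trivially up to $\Gamma_\psi$-conjugacy, so it is enough to prove: for every $\varphi\in\cE(c,\oR_0)$ and every $x\in U_c^0$ (more precisely, its finite part) the embeddings $x\star\varphi$ and $\varphi$ are $\Gamma_\psi$-conjugate, equivalently $[\tau_{x\star\varphi}]=[\tau_\varphi]$ in $X_\psi(\C)$. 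Indeed, this yields that every point of $\CM_\psi(c)$ is fixed by $\rec(U_c^0)$, hence by $\Gal(K^{ab}/L_c)$, which is the assertion.

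First I would unwind the $\star$-action for a general $x\in U_c$, whose finite part lies in $\hat\cO_c^\times$. Then $\hat\varphi(x)\in\hat\oR_0^\times$ (a ring embedding carries units to units), so $\hat\varphi(x)\hat\oR_0\cap\B=\oR_0$ and the element $\gamma_x$ in the definition of $\star$ may be taken in $\oR_0^\times$. Adjusting it on the right by units as in that definition, and if necessary by a unit of reduced norm $-1$ followed by an element of $\Gamma_0\smallsetminus\Gamma_\psi$, we may assume in addition that $\gamma_x\in\Gamma_0$, that $\iota_\infty(\gamma_x)$ has positive determinant, and that $\psi\circ\eta(a_x)=1$ for $a_x=\hat\varphi((x)_{\fN^+})^{-1}\gamma_x$. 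With such a choice $x\star\varphi=\gamma_x^{-1}\varphi\gamma_x$, so $[\tau_{x\star\varphi}]=[\gamma_x^{-1}\tau_\varphi]$ in $X_\psi(\C)$, and this is $\Gamma_\psi$-equivalent to $[\tau_\varphi]$ exactly when $\gamma_x\in\Gamma_\psi\cdot\operatorname{Stab}_{\Gamma_0}(\tau_\varphi)$. Since a maximal subfield is its own centralizer and the norm form of $K$ is positive definite, $\operatorname{Stab}_{\Gamma_0}(\tau_\varphi)=\Gamma_0\cap\varphi(K^\times)=\varphi(\cO_c^\times)$, which equals $\varphi(\{\pm1\})$ in the cases relevant to the construction; as $\psi$ is even, $\psi\circ\eta$ is trivial on this group, and the condition reduces to $\psi\circ\eta(\gamma_x)=1$.

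It then remains to evaluate $\psi\circ\eta(\gamma_x)$. Because $\eta$ is multiplicative on $\hat\oR_0^\times$ at the primes dividing $N^+$ and $\psi\circ\eta(a_x)=1$, we get $\psi\circ\eta(\gamma_x)=\psi\circ\eta(\hat\varphi((x)_{\fN^+}))$. The normalization condition $\ker(\eta\circ\varphi)=\fN^+$, together with the fact that every prime dividing $N^+$ splits in $K$ (Lemma \ref{lemma: new}, so that $\cO_{c,\fN^+}=\cO_{K,\fN^+}$ has the right residue rings), identifies $\eta\circ\varphi$ with reduction modulo $\fN^+$ through the isomorphism $\cO_{K,\fN^+}^\times/(1+\fN^+\cO_{K,\fN^+})\cong(\Z/N^+\Z)^\times$; hence $\psi\circ\eta(\hat\varphi((x)_{\fN^+}))=\psi((x)_{\fN^+})$ in the notation of \S\ref{subsec: Extensions of L}. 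Therefore $[\tau_{x\star\varphi}]=[\tau_\varphi]$ if and only if $(x)_{\fN^+}\in\ker\psi$, which for $x\in U_c$ is precisely the condition $x\in U_c^0$. This completes the proof. The main work lies in the second paragraph: verifying that the $\gamma_x$ produced by the recipe defining $\star$ can be renormalized so as to lie in $\Gamma_0$ with positive reduced norm while preserving $\psi\circ\eta(a_x)=1$, and pinning down the local shape of $\eta\circ\varphi$ at the split primes above $N^+$ so as to justify the identification with reduction modulo $\fN^+$; the stabilizer computation is a routine side remark, and everything else is formal.
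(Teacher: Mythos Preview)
Your proposal is correct and follows exactly the route the paper indicates: the paper's proof is the single sentence ``It follows directly from \eqref{eq: shimura reciprocity law} and the fact that $U_c^0$ acts trivially on $\cE(c,\oR_0)/\Gamma_\psi$,'' and what you have written is precisely a careful verification of that second clause. Your unwinding of the $\star$-action for $x\in U_c$ (choosing $\gamma_x\in\Gamma_0$, identifying $\psi\circ\eta(\gamma_x)$ with $\psi((x)_{\fN^+})$ via the normalization at $\fN^+$, and handling the small stabilizer contribution) is the natural and intended argument; the paper simply leaves these details to the reader.
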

\begin{proof}
 It follows directly from \eqref{eq: shimura reciprocity law} and the fact that $U_c^0$ acts trivially on
$\cE(c,\oR_0)/\Gamma_\psi$.
\end{proof}

\section{ATR points on $\Q$-curves}\label{section: Heegner points on Q-curves}
In this section we introduce the main construction of this note, namely an ATR point in $E$ manufactured by means of CM points on $X_\psi$. To this end, let us briefly recall the setting of Section \ref{section: Q-curves and ATR extensions} and some of the results encountered so far. The initial data is a classical modular form $f=f_E\in S_2(N,\psi)$ such that $N$ is odd or squarefree, $\psi$ is of order $2$, and its field of Fourier coefficients $K_f$ is quadratic imaginary. Then the modular abelian variety $A_f$ is $F$ isogenous to the square of a $\Q$-curve $E$, which  is defined over the real quadratic field $F$ corresponding to $\psi$. In fact, $E/F$ is characterized by the equality of $L$-functions
\begin{align}\label{eq: L-functions}
 L(E/F,s)=\prod_{\sigma\colon K_f\hookrightarrow \C}L(f,s).
\end{align}
  Let $M$ be a quadratic ATR extension of $F$ such that $L(E/M,s)$ has sign $-1$.
This gives rise to a quadratic imaginary extension $K$, sitting in the field diagram \eqref{eq: diagram}. We assume that level $N$ factorizes as $N=N^+ N^-$, where $N^+$ is supported on the primes that split in $K$ and $N^-$ is the squarefree product of an even number of primes that are inert in $K$.

By Lemma~\ref{lemma: new} the discriminant of $L/K$ factorizes as $c \fN_{\psi,L}$, with $c\in\Z$ and
$\fN_{\psi,L}\subset K$ an integral ideal of norm $N_\psi$. Recall also that we fixed an ideal $\fN^+$ of norm $N^+$ with $\fN_{\psi,L}\mid \fN^+$. Recall also $\CM_\psi(c)$,  the set of Heegner points of conductor $c$ (and normalized with respect to $\fN^+$), which lie in $X_\psi(L_c)$.

\subsection{Construction of the ATR point}\label{subsection: construction of ATR}  Recall that, as we have seen before, by Simura's reciprocity law if $\tau\in \CM_\psi(c)$  then $\tau\in X_\psi(L_c)$. Next, we describe how to attach to each such Heegner point $\tau$ a point $P_\tau\in E(M)$.

 Let $S_2(\Gamma_\psi)=S_2(\Gamma_\psi(N^+,N^-))$ denote the space of weight two newforms with respect to $\Gamma_\psi$.
Thanks to the Jacquet--Langlands correspondence there exists a newform $g\in S_2(\Gamma_\psi)$ such that
$L(g,s)=L(f,s)$. In other words, $g$ has the same system of eigenvalues by the Hecke operators as $f$. In addition, if we let $J_\psi=\Jac(X_\psi)$ there exists a
surjective homomorphism
defined over $\Q$ (see \cite[\S 1.2.3]{Zhang_book})
\begin{align}\label{eq: pi_f}
 \pi_{f}\colon J_\psi\lra A_f.
\end{align}
The map $\pi_f$ is given explicitly in terms of integrals of $g$ and its conjugate (cf. \S \ref{subsection: complex uniformization} below). 

The next step is to associate to $\tau$ a divisor of  degree $0$ on $X_\psi$, hence a point in $J_\psi$ that we can project to $A_f$.  In the case of classical modular curves, i.e. when $N^- = 1$, the usual procedure is to use the embedding $X_\psi\hookrightarrow J_\psi$ given  by the choice of the rational cusp $\infty$ as base point. That is to say, the degree $0$ divisor attached to $\tau$ would be $(\tau)-(\infty)$. However, when $N^->1$ the Shimura curve $X_\psi$ does not have cusps. In this case, Zhang (cf. \cite[\S 1.2.2]{Zhang_book}) uses the map $\phi:X_\psi\ra J_\psi$ that sends $\tau$ to $\tau-\xi$, where $\xi\in\Pic(X_\psi)\otimes\Q$ is the so called Hodge class. Then the point $P_\tau' = \pi_f\circ\phi(\tau)$ belongs to $A_f(L_c)$.

Before continuing with the construction of the point $P_\tau$, it is worth mentioning how $P_\tau'$ (or rather a closely related point) can be computed in practice because this will be used in the explicit calculations of \S \ref{section: example}. For this we follow a remark of \cite[\S4.4]{MOK}, based on the fact that $\xi$ is a divisor of degree $1$ satisfying that  $T_\ell\xi = (\ell+1)\xi$ for all primes $\ell$ (here $T_\ell$ stands for the $\ell$-th Hecke operator). Thanks to the Hecke-equivariance of $\pi_f$, if we let $a_\ell$ denote the $\ell$-th Hecke eigenvalue of $f$ we have that
\begin{align*}
 (\ell+1-a_\ell) \pi_f(\phi(\tau)) = &\pi_f((\ell+1-T_\ell)\phi(\tau))\\ =& \pi_f((\ell+1-T_\ell)(\tau-\xi))\\ =& \pi_f((\ell+1-T_\ell)\tau).
\end{align*}
This gives an expression for $(\ell+1-a_\ell)P_\tau'$, in which we are regarding $a_\ell$ as an endomorphism of $A_f$ via the identification $K_\ell\simeq\End_\Q(A_f)\otimes\Q$.

Returning to the construction of $P_\tau$, recall that by Lemma \ref{lemma: L is contained in L_c} the field $L_c$ contains $L$.
Then we define 
\[
 P_{\tau,L}=\Tr_{L_c/L}(P_\tau')\in A_f(L).
\]
Our running assumption that the sign of the functional equation for $E/M$ is $-1$ implies that $L(E/M,1)=0$. Next, we apply the Gross--Zagier-type formula of \cite{Zhang_book} in order to show that $P_{\tau,L}$ is of infinite order in analytic rank one situations. The natural case to consider is when $L(E/F,1)\neq 0$, because otherwise the non-torsion point on $E$ would already be defined over $F$.

\begin{proposition}\label{prop: non-torsion}
 If $L(E/F,1)\neq 0$ and $L'(E/M,1)\neq 0$ then $P_{\tau,L}$ is non-torsion.
\end{proposition}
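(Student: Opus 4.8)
The plan is to prove that the N\'eron--Tate height $\hat h(P_{\tau,L})$ on $A_f(L)$ is nonzero; this forces $P_{\tau,L}$ to be non-torsion, since torsion points have vanishing canonical height. By Lemma~\ref{lemma: L is contained in L_c} we have $L\subset L_c$, so $\chi_L$ factors through $\Gal(L_c/K)$; writing $\Gal(L/K)=\langle\sigma_0\rangle$, consider the $\chi_L$-isotypic projection
\[
 Q:=P_{\tau,L}-\sigma_0(P_{\tau,L})=\sum_{g\in\Gal(L_c/K)}\chi_L(g)\,g(P_\tau')\ \in\ A_f(L).
\]
If $P_{\tau,L}$ were torsion, then so would be $\sigma_0(P_{\tau,L})$ and hence $Q$; it thus suffices to prove that $Q$ is non-torsion. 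By construction $Q$ is, up to the nonzero factor $[L_c:L]$, the image under $\pi_f\circ\phi$ of the $\chi_L$-isotypic part of the CM point $\tau\in\CM_\psi(c)\subset X_\psi(L_c)$, with $\phi(\tau)=(\tau)-\xi$ normalized by the Hodge class.

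I would then invoke the Gross--Zagier formula for Shimura curves in the Rankin--Selberg form of Zhang~\cite{Zhang_book}, adapted to the $\Gamma_\psi$-level structure exactly as in \cite[\S4]{DRZ}. Viewing $\chi_L$ as a Hecke character of $K$ of conductor $\fd_{L/K}=c\,\fN_{\psi,L}$, this relates $\hat h(Q)$ to the central derivative $L'(f/K,\chi_L,1)$ of the $L$-series of the base change of $f$ to $K$ twisted by $\chi_L$; the $L$-function literally occurring in the formula is the Rankin convolution $L(f\times\theta_{\chi_L},s)$ of $f$ with the (cuspidal) weight-one theta series of $\chi_L$, which differs from $L(f/K,\chi_L,s)$ only by finitely many Euler factors at primes above $N$ and so has the same order of vanishing at $s=1$. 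In particular $\hat h(Q)=0$ if and only if $L'(f/K,\chi_L,1)=0$. Carrying this step out precisely --- identifying the $\xi$-normalized Heegner point with the point appearing in \cite{Zhang_book}, tracking the nebentypus through the Jacquet--Langlands transfer $g$ and the twist so that the relevant Hecke character is exactly $\chi_L$, and checking that the constant of proportionality is nonzero --- is the main technical point; the rest is Artin formalism together with elementary manipulation of orders of vanishing.

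It remains to show $L'(f/K,\chi_L,1)\neq 0$. Using $A_f\sim_F E^2$, the identity $\Ind_F^\Q\chi_M=\Ind_K^\Q\chi_L$ of Lemma~\ref{lemma: diagram properties}(3), and the inductivity of $L$-functions, one obtains
\[
 L(f/K,\chi_L,s)\cdot L(f/K,\chi_L',s)=L(A_f/K,\chi_L,s)=L(A_f/F,\chi_M,s)=L(E/F,\chi_M,s)^2=\Big(\tfrac{L(E/M,s)}{L(E/F,s)}\Big)^2,
\]
where the twisted $L$-functions are defined in the evident way. By Lemma~\ref{lemma: diagram properties}(1) we have $\chi_L'=\chi_L\cdot(\varepsilon_F\circ\Nm_{K/\Q})$, and since $\varepsilon_F=\psi$ is the inner twist of $f$ this gives $L(f/K,\chi_L',s)=L(\bar f/K,\chi_L,s)$, which has the same order of vanishing at $s=1$ as $L(f/K,\chi_L,s)$; hence $\ord_{s=1}L(f/K,\chi_L,s)=\ord_{s=1}L(E/F,\chi_M,s)$. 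Now $L(E/F,1)\neq 0$ forces the sign of the functional equation of $L(E/F,s)$ to be $+1$, so, as $L(E/M,s)=L(E/F,s)\,L(E/F,\chi_M,s)$ has sign $-1$, the sign of $L(E/F,\chi_M,s)$ is $-1$; in particular $L(E/F,\chi_M,1)=0$, whence $L'(E/M,1)=L(E/F,1)\cdot L'(E/F,\chi_M,1)$. The hypothesis $L'(E/M,1)\neq 0$ then gives $L'(E/F,\chi_M,1)\neq 0$, so that $L(E/F,\chi_M,s)$ --- and therefore $L(f/K,\chi_L,s)$ --- vanishes to order exactly $1$ at $s=1$. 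Thus $L'(f/K,\chi_L,1)\neq 0$, whence $\hat h(Q)\neq 0$ and $P_{\tau,L}$ is non-torsion.
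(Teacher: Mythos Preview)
Your proof is correct and follows essentially the same strategy as the paper: form the $\chi_L$-isotypic combination $Q=P_{\tau,L}-\sigma_0(P_{\tau,L})=\sum_{g\in\Gal(L_c/K)}\chi_L(g)\,g(P_\tau')$, apply Zhang's Gross--Zagier formula to relate $\hat h(Q)$ to $L'(f/K,\chi_L,1)$, and then use Artin formalism plus the hypotheses to show this derivative is nonzero. The only notable difference is in the $L$-function bookkeeping: the paper identifies $L(E/F,\chi_M,s)$ with $L(f/K,\chi_L,s)$ directly in one line via $L(E/F,\chi_M,s)=L(f/F\otimes\chi_M,s)=L(f\otimes\Ind_F^\Q\chi_M,s)=L(f\otimes\Ind_K^\Q\chi_L,s)=L(f/K,\chi_L,s)$, whereas you take a longer detour through $L(A_f/K,\chi_L,s)$ and the inner-twist relation $L(f/K,\chi_L',s)=L(\bar f/K,\chi_L,s)$; both routes are valid, but the paper's is shorter and avoids having to match up the two factors.
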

\begin{proof}
From the basic equality 
\begin{align*}
  L(E/M,s) = L(E/F,s)L(E/F,\chi_M,s)
\end{align*}
we see that $L(E/F,\chi_M,1)=0$, because of the assumption $L(E/F,1)\neq 0$. The derivative of the above expression, together with the assumption that $L'(E/M,1)\neq 0$ also implies that $L'(E/F,\chi_M,1)\neq 0$.
By \eqref{eq: L-functions}, the Artin formalism, and Lemma \ref{lemma: diagram properties} we have that
\begin{align*}
  L(E/F,\chi_M,s) =& L(f/F\otimes \chi_M,s)=L(f\otimes \Ind_F^\Q \chi_M,s )\\ =& L(f\otimes \Ind_K^\Q \chi_L,s ) = L(f/K\otimes  \chi_L,s ),
\end{align*}
from which we obtain that $L'(f/K\otimes\chi_L,1)\neq 0$.

The modular form $f$ gives rise to a cuspidal automorphic representation $\pi$ of $\GL_2(\A_\Q)$ whose central character $\omega_\pi$ is the Nebentypus $\psi$ of $f$. Since $\omega_\pi\cdot \chi_{L|\A_\Q^\times}=1$ by Lemma \ref{lemma: diagram properties}, we are in the position of applying the Gross--Zagier-type formula of \cite[\S 1.3.2]{Zhang_book}. For that, starting with the Heegner point $P_\tau'\in A_f(L_c)$ and regarding $\chi_L$ as a character of $\Gal(L_c/K)$ we set
\begin{align}\label{eq: twist of point}
  P_{\tau}^{\chi_L} := \sum_{\sigma \in\Gal(L_c/K)} \chi_L(\sigma)^{-1}\cdot \sigma(P_\tau').
\end{align}
Then \cite[Theorem 1.2]{Zhang_book} expresses the Neron--Tate heigh of $P_\tau^{\chi_L}$ as a non-zero multiple of $L'(f/K\otimes \chi_L,1)$. This implies that $P_\tau^{\chi_L}$ is non-torsion. But now, since $\chi_L$ is the character corresponding to the quadratic extension $L/K$, we can write
\begin{align*}
  P_\tau^{\chi_L} = \sum_{\sigma \in\Gal(L_c/L)} \sigma(P_\tau') + \sum_{\sigma \in\Gal(L_c/L)} (-1)\cdot (s\sigma)(P_\tau'),
\end{align*}
where $s\in\Gal(L_c/K)$ is an element that induces the nontrivial automorphism of $L/K$. This can be written as
\begin{align}\label{eq: twist of point 2}
  P_\tau^{\chi_L} = P_{\tau,L} - s(P_{\tau,L}),
\end{align}
and we see that $P_{\tau,L}$ must be non-torsion as well.
\end{proof}

Now let $\tau_M$ denote the element in $\Gal(\cM/\Q)$ whose fixed field is $M$. We define
\[
 P_{\tau,M}=P_{\tau,L} + \tau_M(P_{\tau,L}),
\]
which belongs to $A_f(M)$. We have the following consequence of Proposition \ref{prop: non-torsion}.
\begin{proposition}
 If $L(E/F,1)\neq 0$ and $L'(E/M,1)\neq 0$ then $P_{\tau,M}$ is non-torsion. 
\end{proposition}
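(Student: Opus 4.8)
The plan is to deduce the statement from Proposition~\ref{prop: non-torsion} by exhibiting $P_{\tau,M}$ as a point whose projection onto the $\chi_L$-isotypic component of $A_f$ over $K$ coincides, up to an innocuous factor, with the non-torsion point $P_\tau^{\chi_L}$ produced inside the proof of Proposition~\ref{prop: non-torsion}. First I would record the two inputs. On the one hand, from \eqref{eq: twist of point 2} and the proof of Proposition~\ref{prop: non-torsion}, under the standing hypotheses the point $P_\tau^{\chi_L}=P_{\tau,L}-s(P_{\tau,L})$ is non-torsion (its height being a nonzero multiple of $L'(f/K\otimes\chi_L,1)$). On the other hand, from Diagram~\eqref{eq: diagram} the extension $\cM/K$ is Galois with $\Gal(\cM/K)$ a Klein four group, whose three quadratic subextensions are $L$, $L'$ and $FK$. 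I would then work inside $W:=A_f(\cM)\otimes_\Z\Q$, which contains all of $P_{\tau,L}$, $s(P_{\tau,L})$, $\tau_M(P_{\tau,L})$ and hence $P_{\tau,M}$, and which carries a $\Gal(\cM/K)$-action; let $e_{\chi_L}=\tfrac{1}{4}\sum_{g\in\Gal(\cM/K)}\chi_L(g)\,g$ be the idempotent projecting onto the $\chi_L$-isotypic part of $W$, which is $\Q$-rational because $\chi_L$ is quadratic.

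The argument would then run through three short computations. (i) Since $P_{\tau,L}\in A_f(L)\otimes\Q$ is fixed by $\Gal(\cM/L)$, while the nontrivial coset of $\Gal(\cM/L)$ in $\Gal(\cM/K)$ acts on $A_f(L)\otimes\Q$ through the nontrivial automorphism $\iota$ of $L/K$, the operator $e_{\chi_L}$ restricts to $\tfrac{1}{2}(1-\iota)$ there, so $e_{\chi_L}(P_{\tau,L})=\tfrac{1}{2}(P_{\tau,L}-\iota(P_{\tau,L}))=\tfrac{1}{2}(P_{\tau,L}-s(P_{\tau,L}))=\tfrac{1}{2}P_\tau^{\chi_L}$, using that $s$ restricts to $\iota$ on $L$. (ii) Because $\tau_M$ fixes $M\supset F$ but, since $K\not\subset M$, induces complex conjugation on $K$, and because the only $\Q$-conjugates of $L$ inside $\cM$ are $L$ and $L'$ whereas no $\Q$-automorphism of $L$ moves $K$, I would conclude $\tau_M(L)=L'$, so that $\tau_M(P_{\tau,L})\in A_f(L')\otimes\Q$. (iii) A computation of the same flavour shows $e_{\chi_L}$ annihilates $A_f(L')\otimes\Q$, since an $L'$-point lies in the sum of the trivial and the $\chi_L'$-isotypic parts of $W$, both orthogonal to the $\chi_L$-part. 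Putting (i)--(iii) together yields $e_{\chi_L}(P_{\tau,M})=e_{\chi_L}(P_{\tau,L})+e_{\chi_L}(\tau_M(P_{\tau,L}))=\tfrac{1}{2}P_\tau^{\chi_L}\neq 0$, and since $e_{\chi_L}$ is a linear endomorphism of $W$ with $e_{\chi_L}(0)=0$, this forces $P_{\tau,M}\neq 0$ in $W$, i.e.\ $P_{\tau,M}$ is non-torsion.

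I expect the one genuinely delicate point to be the Galois bookkeeping in $\Gal(\cM/\Q)\cong D_{2\cdot 4}$: matching $\Gal(\cM/K)$ and its order-$2$ subgroups with the fields $L$, $L'$, $FK$ of Diagram~\eqref{eq: diagram}, verifying that $\tau_M$ restricts to the nontrivial automorphism of $K/\Q$ and consequently that $\tau_M(L)$ equals $L'$ and not $L$, and checking that the idempotent $e_{\chi_L}$ kills the Mordell--Weil group of $A_f$ over $L'$. All of these are routine consequences of Diagram~\eqref{eq: diagram} and of Lemma~\ref{lemma: diagram properties}, but they need to be laid out with some care; granting them, the non-torsionness of $P_{\tau,M}$ follows at once from that of $P_\tau^{\chi_L}$.
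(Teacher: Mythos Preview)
Your proof is correct and rests on the same two ingredients as the paper's: that $P_\tau^{\chi_L}$ is non-torsion and that $\tau_M(L)=L'$. The paper packages these slightly more directly, arguing by contradiction that if $nP_{\tau,M}=0$ then $nP_{\tau,L}=-n\,\tau_M(P_{\tau,L})$ would lie in $A_f(L)\cap A_f(L')=A_f(K)$, contradicting the first ingredient; your projection by the idempotent $e_{\chi_L}$ is an equivalent way of isolating the same $\chi_L$-component.
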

\begin{proof}
The key property is that $P_{\tau,L}$ is defined over $L$, but it is not defined over $K$ (this follows from \eqref{eq: twist of point 2}: if it was defined over $K$, then $P_\tau^{\chi_L}$ would be 0 because $s$ would fix it, but under the hypothesis of the Proposition $P_\tau^{\chi_L}$ is non-torsion). The same is true for any multiple $n\cdot P_{\tau,L}$. Since $\tau_M(L)=L'$, we see that $\tau_M(P_{\tau,L})$ is defined over $L'$ (and is not defined over $K$). But now, if $P_{\tau,L}+\tau_M(P_{\tau,L})$ was torsion, say of order $n$, we would have that 
\begin{align*}
  nP_{\tau,L} = -n\tau_M(P_{\tau,L}),
\end{align*}
which is a contradiction because the point in the left is defined over $L$, and the point on the right is defined over $L'$.
\end{proof}
Finally, in order to define $P_\tau$, recall that $A_f$ is $F$-isogenous to $E^2$. In particular, $\Q\otimes_\Z A_f(M)\simeq \Q\otimes_\Z E(M)\times \Q\otimes_\Z E(M)$, so that we can choose a projection $\pi_E\colon A_f\ra E$ defined over $F$ such that 
\begin{align*}
  P_\tau = \pi_E(P_{\tau,M})\in E(M)
\end{align*}
is of infinite order when $P_{\tau,M}$ is.

\begin{remark}
Observe that the projection $\pi_E$ is not uniquely determined by the above condition. However, this does not affect the construction in a sensible way because the property for a point being defined over $M$ or being of infinite order is not affected by isogenies defined over $F$.
\end{remark}

One of the main motivations for the construction of the point $P_\tau$ is that it extends the construction of \cite{DRZ}
to the case  $N^->1$. However, a nice feature of the setting considered in \cite{DRZ} is that in that case the points can be
effectively computed (cf. the explicit formula of \cite[Theorem 4.6]{DRZ}) as suitable integrals of the classical
modular form $f$. In our situation, however, the equivalent computation seems to be more difficult, because the modular
forms involved are \emph{quaternionic modular forms}. This is the issue that we address in the next paragraph. As we
will see in \S\ref{section: p-adic uniformization}, the effective computation of $P_\tau$ can be accomplished by using
$p$-adic methods.

\subsection{Complex uniformization and Heegner points}  \label{subsection: complex uniformization}

 The projection map $\pi_f$ of \eqref{eq: pi_f} is given by a generalization of the
classical Eichler--Shimura construction (cf. \cite[\S4]{Da2}). In this context,
the quaternionic modular form $g$ gives rise to a differential form $\omega_g\in H^0(X_\psi,\Omega^1)$. Recall that $g$ is obtained via the Jacquet--Langlands correspondence from an elliptic modular form $f$. Denote by $f'$ the modular form whose Fourier coefficients are the complex conjugates of those of $f$, and let $g'$ denote the modular form with respect to $\Gamma_\psi$ corresponding to $f'$ by Jacquet--Langlands. Observe that $\omega_g$ and $\omega_g'$ are determined by this construction only up to multiplication by scalars, but they can be normalized so as to satisfy that $\{\omega_g,\omega_g'\}$ is a basis of the space of differential $1$-forms defined over $F$. Let $\Phi=\Phi_{N^+,N^-}$ be the map
\[
 \begin{array}{cccc}
  \Phi\colon & \operatorname{Div}^0(\cH) & \lra & \C\times \C\\
 & z_2-z_1 & \longmapsto & \left( \int_{z_1}^{z_2}\omega_g,\int_{z_1}^{z_2}\omega_{\bar g}\right).
 \end{array}
\]
The subgroup generated by the images under $\Phi$ of divisors which become trivial in
$\Gamma_\psi\backslash\cH$ is a lattice $\Lambda_{g}\subset \C\times\C$, and $\C^2/\Lambda_g$ is isogenous to $A_f(\C)$. This
gives the following analytic description of $\pi_f$:
\begin{equation}\label{eq: shimura curve parametrization}
\begin{array}{cccc}
 \Phi\colon & \operatorname{Div}^0(\cH/\Gamma_\psi)&\lra & A_f(\C)\\
 & z_2-z_1 & \longmapsto & \left( \int_{z_1}^{z_2}\omega_g,\int_{z_1}^{z_2}\omega_{\bar g}\right).
\end{array}
\end{equation}
Suppose that $D=\tau_2-\tau_1\in \Div^0 \CM_\psi(c)$. We see that the point $\pi_f(D)\in A_f(L_c)$ is given, in complex analytic terms, by the formula
\begin{align}\label{eq: pi_f(D) complex}
 \pi_f(D)= \left( \int_{\tau_1}^{\tau_2}\omega_g, \int_{\tau_1}^{\tau_2}\omega_{\bar g}\right)\in \C^2/\Lambda_g \simeq A_f(\C).
\end{align}

The effective computation of the above integrals, however, turns out to be difficult in general when $\B$ is a division algebra, because the 
newforms in $S_2(\Gamma_\psi)$ cannot be expressed as a Fourier expansion at the cusps. In the next section, and
modeling on the classical case of newforms in  $S_2(\Gamma_0)$, we will see that the points $P_\tau$ defined in \S
\ref{subsection: construction of ATR} can be computed via $p$-adic uniformization, instead of complex uniformization.

\section{$p$-adic uniformization and CM points}\label{section: p-adic uniformization}
If $p$ is a prime dividing $N^-$ the abelian varieties $J_0=\Jac(X_0)$ and  $J_\psi=\Jac(X_\psi)$ admit  rigid analytic uniformizations at $p$. That is
to say, there exist free groups of finite rank $\Lambda_0,S_0,\Lambda_\psi,S_\psi$ together with isomorphisms
\begin{equation}\label{eq: rigid analytic uniformization}
  J_0(\C_p)\simeq \Hom(S_0,\C_p^\times)/\Lambda_0, \ \ \   J_\psi(\C_p)\simeq \Hom(S_\psi,\C_p^\times)/\Lambda_\psi.
\end{equation}
In this section we use the $p$-adic uniformization of \cerednikdrinfeld{}, in the explicit formulation provided by Bertolini--Darmon, in order to give a $p$-adic analytic formula for the points $\pi_f(D)$ for $D$ a degree $0$ divisor in $J_\psi$ of \eqref{eq: pi_f}. The main feature of this formula, in contrast with that of \eqref{eq: pi_f(D) complex}, is that it is well suited for numerical computations, thanks to the explicit algorithms of \cite{FM}.
\subsection{\cerednikdrinfeld{} uniformization}\label{CD theorem} The main reference for this part is 
\cite[\S5]{Boutot-Carayol}. Recall the indefinite quaternion algebra $\B/\Q$ of discriminant $N^-$ and $\oR_0\subset\B$ the Eichler order or level $N^+$ that we fixed in \S\ref{section: CM points on Shimura curves with
quadratic character}. Now let $B/\Q$ be the definite quaternion algebra obtained from $\B$ by interchanging the invariants $p$ and $\infty$. That is to say, its set of ramification primes is
\[
 \operatorname{ram}(B)=\{ \ell \colon \ell \neq p \text{ and } \ell\mid N^-\}\cup \{\infty\}.
\]
For every $\ell\mid p N^+$ fix an isomorphism
\begin{align*}
  i_\ell\colon B\otimes\Q_\ell\lra \M_2(\Q_\ell).
\end{align*}
Let $R_0$ be a $\Z[\frac{1}{p}]$-Eichler order of level $N^+$ in $B$, which is unique up to conjugation by elements in $B^\times$. In fact, we can choose $R_0$ in such a way that is locally isomorphic to $\mathcal{R}_0$ at every prime $\ell\neq p$.  Let $\Gamma_0^{(p)}= (R_0)_1^\times$ denote the group of norm $1$ units, and let
\begin{align*}
  R_\psi = \{\gamma\in R_0\colon \gamma\in \ker (\psi\circ \eta) \},
\end{align*}
where $\eta\colon R_0\ra \Z_{N^+}/N^+\Z_{N^+}$ denotes the map that sends $\gamma$ to the upper left entry of $i_{N^+}(\gamma)$ taken modulo $N^+$. Set $\Gamma_\psi^{(p)} = (R_\psi)^\times_1$.

Both groups $\Gamma_0^{(p)}$ and  $\Gamma_\psi^{(p)}$ act on the $p$-adic upper half plane $\cH_p$ by means of $i_p$, and the
quotients $\Gamma_0^{(p)}\backslash \cH_p$ and 
$\Gamma_\psi^{(p)}\backslash \cH_p$ are rigid analytic varieties. In the following statement we collect some particular cases of the \cerednikdrinfeld{} theorem. We denote by $\Q_{p^i}$ the unramified extension of $\Q_p$ of degree $i$.
\begin{theorem}\label{th: BC} 
\begin{enumerate}
 \item $X_0\otimes\Q_{p^2}\simeq \Gamma_0^{(p)}\backslash\cH_p(\Q_{p^2})$.
\item If $p$ is split in $F$ then $X_\psi\otimes\Q_{p^2}\simeq \Gamma_\psi^{(p)}\backslash \cH_p(\Q_{p^2})$.
\item  If $p$ is inert
in $F$ then $X_\psi\otimes\Q_{p^4}\simeq \Gamma_\psi^{(p)}\backslash \cH_p(\Q_{p^4})$.
\end{enumerate}
\end{theorem}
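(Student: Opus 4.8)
The plan is to deduce all three statements from the general form of the \cerednikdrinfeld{} theorem (see \cite[\S5]{Boutot-Carayol}), keeping careful track of fields of definition. Recall that the basic statement of the theorem gives, for the Shimura curve $X_0 = X_0(N^+,N^-)$ with $p \mid N^-$, a rigid analytic uniformization
\[
  X_0 \otimes_\Q \Q_{p^2} \simeq \big(\Gamma_0^{(p)}\backslash \cH_p\big)\otimes \Q_{p^2},
\]
where $B$ is the definite quaternion algebra obtained from $\B$ by swapping the invariants at $p$ and $\infty$, and $\Gamma_0^{(p)} = (R_0)_1^\times$ with $R_0$ a $\Z[1/p]$-Eichler order of level $N^+$. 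The extension to $\Q_{p^2}$ (rather than $\Q_p$) is because \cerednikdrinfeld{} a priori only provides a form of $X_0$ over $\Q_p$ that becomes the stated quotient after the unramified quadratic twist built into Drinfel'd's theorem; this gives (1) directly.

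For (2) and (3), the point is that $X_\psi$ is not a Shimura curve for an Eichler order but carries the extra $\Gamma_1$-type level structure cut out by $\psi$; equivalently, $X_\psi \to X_0$ is the degree $2$ cover corresponding to the subgroup $\Gamma_\psi \subset \Gamma_0$. I would first observe that $X_\psi$ is obtained from $X_0$ by adjoining a square root of a quantity governed by the Nebentypus $\psi$ and the Eichler level structure at the primes dividing $N^+$; concretely, the cover $X_\psi \to X_0$ is the quotient by the norm-$1$ units modulo $\ker(\psi\circ\eta)$, so over $\overline\Q$ one has $X_\psi(\C) = \Gamma_\psi\backslash\cH$. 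Under the \cerednikdrinfeld{} equivalence the archimedean uniformizing group $\Gamma_0$ is replaced by the $p$-arithmetic group $\Gamma_0^{(p)}$, and correspondingly $\Gamma_\psi$ is replaced by $\Gamma_\psi^{(p)} = (R_\psi)_1^\times$ defined in the statement. So as rigid analytic curves over $\C_p$ we get $X_\psi \simeq \Gamma_\psi^{(p)}\backslash\cH_p$; the only issue is the minimal field over which this holds.

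The field of definition is where the dichotomy in (2) versus (3) comes from, and this is the main obstacle. The subtlety is that the character $\psi$ is the quadratic character $\varepsilon_F$ cutting out the real quadratic field $F$ (Lemma \ref{lemma: diagram properties} and the identification $\varepsilon_F = \psi$), and the cover $X_\psi \to X_0$ is a twist by $\psi$. When passing from the archimedean to the $p$-adic uniformization the relevant field extension is governed by how $\psi$ — equivalently, how the prime $p$ — behaves in $F$. If $p$ splits in $F$, then $\psi_p$ is trivial on $\Z_p^\times$, the local cover splits over $\Q_{p^2}$ already present in (1), and one gets $X_\psi\otimes\Q_{p^2}\simeq \Gamma_\psi^{(p)}\backslash\cH_p(\Q_{p^2})$. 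If $p$ is inert in $F$, then the quadratic twist by $\psi$ introduces a further unramified quadratic extension — $F\otimes\Q_p = \Q_{p^2}$ is itself the unramified quadratic extension — so one needs to pass to $\Q_{p^4}$, the compositum, to trivialize both the Drinfel'd twist and the $\psi$-twist simultaneously, giving $X_\psi\otimes\Q_{p^4}\simeq \Gamma_\psi^{(p)}\backslash\cH_p(\Q_{p^4})$. Concretely I would make this precise by computing the Galois descent datum: $X_\psi$ over $\Q_p$ is a twist of $\Gamma_\psi^{(p)}\backslash\cH_p$ by a cocycle in $H^1(\Gal(\overline\Q_p/\Q_p), \Aut)$ whose restriction factors through $\Gal(\Q_{p^2}/\Q_p)$ in the split case and through $\Gal(\Q_{p^4}/\Q_p)$ in the inert case, because in the latter the automorphism induced by $\psi$ is only defined after the unramified quadratic extension $F_p/\Q_p$. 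Verifying that $\Q_{p^4}$ (and not a ramified extension) suffices uses that $\psi$ is unramified at $p$ — which holds since $p \mid N^- \mid N_0$ and $(N_0, N_\psi) = 1$, so $p \nmid N_\psi$ — so the $\psi$-twist is unramified at $p$, and that Drinfel'd's twist is also unramified; the compositum of two unramified extensions of $\Q_p$ of degree $\le 2$ is contained in $\Q_{p^4}$.
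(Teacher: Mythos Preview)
Your overall strategy---deduce the $\C_p$-statement from \cerednikdrinfeld{} and then analyze the field of definition---matches the paper's. But the mechanism you use for the field of definition is different and, as written, somewhat imprecise.

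The paper's argument is much more concrete. It invokes \cite[Remark 3.5.3.1]{Boutot-Carayol}, which says that for a level subgroup $U$ the \cerednikdrinfeld{} isomorphism descends to $\Q_{p^d}$, where $d$ is the least positive integer such that the scalar matrix $p^d\cdot\mathrm{Id}$ lies in the $p$-component of the level. For $\Gamma_0^{(p)}$ one has $d=1$, giving the classical $\Q_{p^2}$ (part (1)). For $\Gamma_\psi^{(p)}$ one checks directly: $\eta\big(\smtx{p}{0}{0}{p}\big)\equiv p\pmod{N^+}$, so $\smtx{p}{0}{0}{p}\in R_\psi$ if and only if $\psi(p)=1$, i.e.\ if and only if $p$ splits in $F$. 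When $\psi(p)=-1$ one needs $\smtx{p^2}{0}{0}{p^2}$, giving $d=2$ and hence $\Q_{p^4}$. That is the entire proof.

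Your descent/twist argument is morally in the same direction, but there is a small slip: you write ``If $p$ splits in $F$, then $\psi_p$ is trivial on $\Z_p^\times$'' as though this were the dichotomy. In fact $\psi_p$ is \emph{always} trivial on $\Z_p^\times$ here, since (as you yourself note) $p\nmid N_\psi$. The genuine dichotomy is the value $\psi(p)=\varepsilon_F(p)\in\{\pm 1\}$ on the uniformizer, which is exactly what controls whether the scalar $p$ lies in $R_\psi$. Your later remark that ``$F\otimes\Q_p=\Q_{p^2}$ in the inert case'' captures the right phenomenon, but the cocycle computation you sketch remains heuristic; to make it rigorous you would end up reproving the scalar-matrix criterion from Boutot--Carayol anyway. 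It is cleaner to cite that criterion and verify it directly.
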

\begin{proof}
Part $(1)$ is well know. As for parts $(2)$ and $(3)$, it follows from the \cerednikdrinfeld{}
theorem that $X_\psi\otimes\C_p\simeq \Gamma_\psi^{(p)}\backslash
\cH_p$. The only thing that we need to check is that the
isomorphism takes place after extending scalars to  $\Q_{p^2}$ if $p$ splits in $F$, and after extending scalars to
$\Q_{p^4}$ if $p$ is inert in $F$. This
follows from the discussion in
\cite[Remark 3.5.3.1]{Boutot-Carayol}. Indeed, observe that $i_p(R_\psi)\subset\M_2(\Q_p)$, contains $\smtx p 0 0 p$ if $\psi(p)=1$
(i.e., if $p$ splits in $F$), but only
contains
$\smtx{p^2}{0}{0}{p^2}$ if $\psi(p)=-1$ (i.e., if $p$ is inert in $F$). By \cite[Remark 3.5.3.1]{Boutot-Carayol} the
curve
$\Gamma_\psi^{(p)}\backslash \cH_p$ and the isomorphism to $X_\psi$ are defined over $\Q_{p^2}$ and over $\Q_{p^4}$, respectively.
\end{proof}

\subsection{Explicit $p$-adic uniformization} The main reference for this part is \cite[\S 5]{Da2}. Let $\Gamma$ be either $\Gamma_0^{(p)}$ or $\Gamma_\psi^{(p)}$.  The group $\Gamma$ acts on $\cH_p=\C_p\backslash \Q_p$ with
compact quotient. We can
speak of $S_2(\Gamma)$, the space of \emph{rigid analytic modular forms of weight $2$ on $\Gamma$}. It is
the set of
all rigid analytic functions $h\colon \cH_p\ra \C_p$ such that
\[
 h(\gamma\cdot\tau)= (c\tau+d)^2h(\tau) \text{ for all } \gamma=\smtx a b c d \in \Gamma.
\]
Let $\operatorname{Meas}_0(\P^1(\Q_p),\C_p)$ denote the space of $\C_p$-valued measures of $\P^1(\Q_p)$ with total
measure $0$. The group $\Gamma$ acts on it in the following way: if $\mu\in
\operatorname{Meas}_0(\P^1(\Q_p),\C_p)$ and $\gamma\in\Gamma$ then $ (\gamma\cdot \mu)(U)=\mu(\gamma^{-1}U)$. There is an isomorphism, due to Amice--Velu and Vishik (\cite[Corollary 2.3.4]{aws2007-dasgupta-teitelbaum})
\begin{equation}\label{eq: amice-velu-vishik}
 S_2(\Gamma)\simeq \operatorname{Meas}_0(\P^1(\Q_p),\C_p)^{\Gamma},
\end{equation}
where the superscript denotes the elements fixed by $\Gamma$.

Let $\mathcal T$ denote the Bruhat--Tits tree of $\PGL_2(\Q_p)$. Its set of vertices $\cV(\cT)$ is identified with the set of homothety $\Z_p$-lattices in $\Q_p^2$. Its set of oriented edges $\mathcal E (\mathcal T)$ consists of ordered pairs of vertices $(v_1,v_2)$ that can be represented by lattices $\Lambda_1,\Lambda_2$  such that $\Lambda_1\subset \Lambda_2$ with index $p$. For an oriented edge $e=(v_1,v_2)$, we denote $\bar e=(v_2,v_1)$, $s(e)=v_1$, and $t(e)=v_2$. An \emph{harmonic cocycle} is a function
\begin{align*}
  h\colon \cE(\cT)\lra \C_p
\end{align*}
such that $h(e)=-h(\bar e)$ for all $e\in\cE$, and such that for all $v\in\cV(\cT)$
\begin{align*}
  \sum_{s(e)=v}h(e)=0.
\end{align*}
The group $\Gamma$ acts on $\Q_p^2$ via $i_p$, and this induces an action on $\cE(\cT)$. The space of $\Gamma$-invariant measures can be identified with the set of $\Gamma$-invariant harmonic cocycles. This gives an integral structure 
\[
\operatorname{Meas}_0(\P^1(\Q_p),\Z)^{\Gamma}\subset \operatorname{Meas}_0(\P^1(\Q_p),\C_p)^{\Gamma}
\]
given by the $\Z$-valued  harmonic cocycles. It thus gives rise, via the isomorphism \eqref{eq: amice-velu-vishik}, to an integral structure $S_2(\Gamma,\Z)\subset
S_2(\Gamma)$.

If $\mu\in\operatorname{Meas}_0(\P^1(\Q_p),\Z)$ and $r$ is a continuous function on $\P^1(\Q_p)$ then the \emph{multiplicative integral} of $r$ against $\mu$ is defined as 
\begin{align*}
  \Xint\times_{\P^1(\Q_p)}r(t)d\mu(t)= \lim_{\{U_a\}} \prod r(t_a)^{\mu(U_a)},
\end{align*}
where the limit is defined over increasingly fine disjoint covers $\{U_a\}$ of $\P^1(\Q_p)$ and $t_a\in U_a$ is any sample point. 

If $h\in S_2(\Gamma,\Z)$ and $z_1,z_2\in \cH_p$ the \emph{multiplicative line integral}
 $\Xint\times_{z_1}^{z_2} h(z)dz$ is defined to be
\[
 \Xint\times_{z_1}^{z_2} h(z)dz:=\Xint\times_{\P^1(\Q)}\left(\frac{t-z_1}{t-z_2}\right) d_{\mu_h}(t), 
\]
where $\mu_h$ is the measure attached to $h$ by the isomorphism~\eqref{eq: amice-velu-vishik}. This is used to define the $p$-adic Abel--Jacobi map
\[
\begin{array}{ccccc}
 \Phi_{\text{AJ}} \colon & \Div^0(\cH_p) & \lra & \Hom(S_2(\Gamma,\Z),\C_p^\times)&\simeq (\C_p^\times)^g\\
 & z_1-z_2& \longmapsto &\left( h\mapsto \Xint\times_{z_1}^{z_2} h(z)dz \right),&
\end{array}
\]
where $g$ denotes the genus of $\Gamma\backslash \cH_p$. The group of degree $0$ divisors in $\cH_p$ that become trivial
on $\Gamma\backslash \cH_p$ are mapped by $\Phi_{\text{AJ}}$ to a lattice $\Lambda_\Gamma\subset
\Hom(S_2(\Gamma,\Z),\C_p^\times)$. This gives
\[
\begin{array}{ccccc}
 \phi_{\text{AJ}} \colon & \Div^0(\Gamma\backslash\cH_p)& \lra &
\Hom(S_2(\Gamma,\Z),\C_p^\times)/\Lambda_\Gamma\simeq \Jac(X_\Gamma)(\C_p).\\
\end{array}
\]
By particularizing this to the groups $\Gamma_0^{(p)}$ and $\Gamma_\psi^{(p)}$ one obtains an explicit expression for the rigid
analytic uniformizations of \eqref{eq: rigid analytic uniformization}:
\begin{equation*}\Div^0(\Gamma_0^{(p)}\backslash \cH_p)\simeq
J_0(\C_p)\stackrel{\Phi_{\text{AJ}}}{\lra}\Hom(S_2(\Gamma_0^{(p)},\Z),\C_p^\times)/\Lambda_0,
\end{equation*}

\begin{equation*}\Div^0(\Gamma_\psi^{(p)}\backslash \cH_p)\simeq
J_\psi(\C_p)\stackrel{\Phi_{\text{AJ}}}{\lra}\Hom(S_2(\Gamma_\psi^{(p)},\Z),\C_p^\times)/\Lambda_\psi.
\end{equation*}

\subsection{CM points and the  $p$-adic uniformization} Let $\CM_0^p(c)\subset \Gamma_0^{(p)}\backslash\cH_p$ (resp. $\CM_\psi^p(c)\subset \Gamma_\psi^{(p)}\backslash\cH_p$) denote the set of points
corresponding to $\CM_0(c)\subset X_0$ (resp. $\CM_\psi(c)\subset X_\psi$) under the isomorphism $X_0(\C_p)\simeq \Gamma_0\backslash\cH_p$ (resp. $X_\psi(\C_p)\simeq \Gamma_\psi\backslash\cH_p$). 

Bertolini and Darmon give in \cite{BD-Heegner-points} an explicit description of $\CM_0^p(c)$ in terms of certain optimal embeddings of the order of conductor $c$ into $B$. Next, we use this in order to derive the corresponding description of $\CM_\psi^p(c)$. 

Let $R_0$ be an Eichler order of $B$ of level $N^+$ as in \S\ref{CD theorem}. Let $\varphi\colon \cO_c[\frac{1}{p}]\hookrightarrow
R_0$ be an optimal embedding of $\Z[\frac{1}{p}]$-algebras. It has a single fixed point
$\tau_\varphi\in \cH_p$ satisfying
\[
 \alpha \colvec{\tau_\varphi}{1}=i_p(\varphi(\alpha))\colvec{\tau_\varphi}{1}
\]
for all $\alpha\in \cO_c[\frac{1}{p}]$. As before, we can define the notion of \emph{normalized embedding}: the
isomorphism
\[
 i_{N^+}\colon R_0\simeq \{\smtx a b c d \in \M_2(\
 \Z_{N^+})\colon c\in N^+ \Z_{N^+}\}.
\]
allows us to define the homomorphism
\[
 \eta\colon R_0\lra \Z/N^+\Z
\]
that sends each element $x$ to the upper left entry of $i_{N^+}$ modulo $N^+$. Then we say that an optimal
embedding $\varphi\colon \cO_c[\frac{1}{p}]\hookrightarrow R_0$ is normalized with respect to $\fN^+$ if
$\ker(\eta\circ\varphi)= \fN^{+}$. The explicit description of $\CM_0^p(c)$ given by Bertolini--Darmon is then:
\[
 \CM_0^p(c)=\{ [\tau_\varphi]\in \Gamma_0^{(p)}\backslash\cH_p \colon \varphi\in \cE(c,R_0)\}.
\]
Therefore, the set $\CM_\psi^p(c)$ is given by:
\[
 \CM_\psi^p(c)=\{ [\tau_\varphi]\in \Gamma_\psi^{(p)}\backslash\cH_p \colon \varphi\in \cE(c,R_0) \}.
\]
As a consequence of Proposition \ref{prop: field of def of CM points} we see that $\Phi_{\text{AJ}}(\Div^0(\CM_\psi^p(c)))$ is contained in $J_\psi(L_c)$.

\subsection{A $p$-adic analytic formula for ATR points on $\Q$-curves} Recall the modular form $f\in S_2(\Gamma_0(N),\psi)$ corresponding to $E$. There exists a rigid analytic modular form $h\in S_2(\Gamma_\psi,\C_p)$ which is an eigenvector for the Hecke operators, and has the same system of eigenvalues as $f$. Since the eigenvalues of $f$ are defined over the quadratic imaginary field $K_f$ we can identify $h$ with an harmonic cocycle with values in the ring of integers of $K_f$, and we denote by $\bar h$ the complex conjugated cocycle. Then $h_0:=(h+\bar h)/2$ and $h_1:=(h-\bar h)/2i$ belong to $S_2(\Gamma_\psi,\Z)$. Let $\Phi^{(p)}$ be the map
\begin{align*}
\begin{array}{cccc}
  \Phi^{(p)}\colon &\Div^0(\cH_p) & \lra &\C_p^\times \times \C_p^\times \\
               & z_2-z_1 & \longmapsto &{ \left( \Xint\times_{z_1}^{z_2}h_0(z)dz,\Xint\times_{z_1}^{z_2} h_1(z)dz \right)}.
\end{array}
\end{align*}
The image of the divisors whose image under $\Phi^{(p)}$ becomes trivial in $\Gamma_\psi^{(p)} \backslash\cH_p$ generates a lattice $\Lambda_f^p\subset \C_p^\times \times \C_p^\times$, and the quotient\footnote{In fact, in view of Theorem \ref{th: BC} we can even be more precise in the field of definitions and work over $\cH_p(\Q_{p^4})$; we have that $\Q_{p^4}^\times \times \Q_{p^4}^\times/\Lambda_f^p$ is isogenous to $A_f(\Q_{p^4})$} $\C_p^\times \times \C_p^\times/\Lambda_f^p$ is isogenous to $A_f(\C_p)$. In particular, if $D=\tau_2-\tau_1\in \Div^0 \CM_0^p(c)$ we find the following $p$-adic analytic formula for the corresponding CM point in $A_f$:
\begin{align}\label{eq: P_D p-adic}
 \pi_f(D)= \left(\Xint\times_{\tau_1}^{\tau_2}h_0(z)dz ,\Xint\times_{\tau_1}^{\tau_2}h_1(z)dz\right),
\end{align}
which in fact belongs to $A_f(L_c)$. The above formula for $\pi_f(D)$ can be explicitly computed, thanks to a slight modification of the explicit algorithms of~\cite{FM}. In the next section we give a detailed example on how these algorithms can be used in order to compute  in practice $\pi_f(D)$, and therefore also the point $P_\tau\in E(M)$ constructed in \S \ref{section: Heegner points on Q-curves}.

\section{An Example}\label{section: example}

The goal of this section is to illustrate with an example the construction carried out above. Let $F=\Q(\sqrt{5})$ and consider the elliptic curve defined over $F$ given as
\[
E\colon \quad y^2 = x^3 + (-432\sqrt{5}-1296)x + (-113184\sqrt{5}-282960).
\]
Remark that $E$ is a $\Q$-curve, and has conductor $39\cO_F$. We will take $p=13$. The modular form $f_E$ attached to $E$ belongs to $S_2(135,\psi)$, where $\psi$ is the unique quadratic character $\psi\colon (\Z/5\Z)^\times\to \{\pm 1\}$ of conductor $5$. Note that the form $f_E$ has field of coefficients $\Q(\sqrt{-1})$.

We need to construct the quotient of the Bruhat-Tits tree of $\GL_2(\Q_p)$ by the group $\Gamma_\psi$. In order to do so, the algorithms of~\cite{FM} have been adapted to work with congruence subgroups such as $\Gamma_\psi$. The main algorithm of~\cite{FM} returns, given two vertices or edges of the Bruhat-Tits tree, the (possibly empty) set of elements of $\Gamma_0$ relating them. One just needs to check whether the intersection of this set with $\Gamma_\psi$ is empty, which is easily done. The quotient graph that we obtain is represented in Figure~\ref{fig:fundom}. It consists of $4$ vertices and $28$ edges. The numbers next to each side of the square in Figure~\ref{fig:fundom} describe how many edges link each of the two corresponding vertices. For example, there are $8$ edges connecting $v_0$ with $v_1$. Note that all vertices have valency $14=p+1$, so all of them have trivial stabilizers.

\begin{figure}[H]
  \centering
  \includegraphics[width=.3\textwidth]{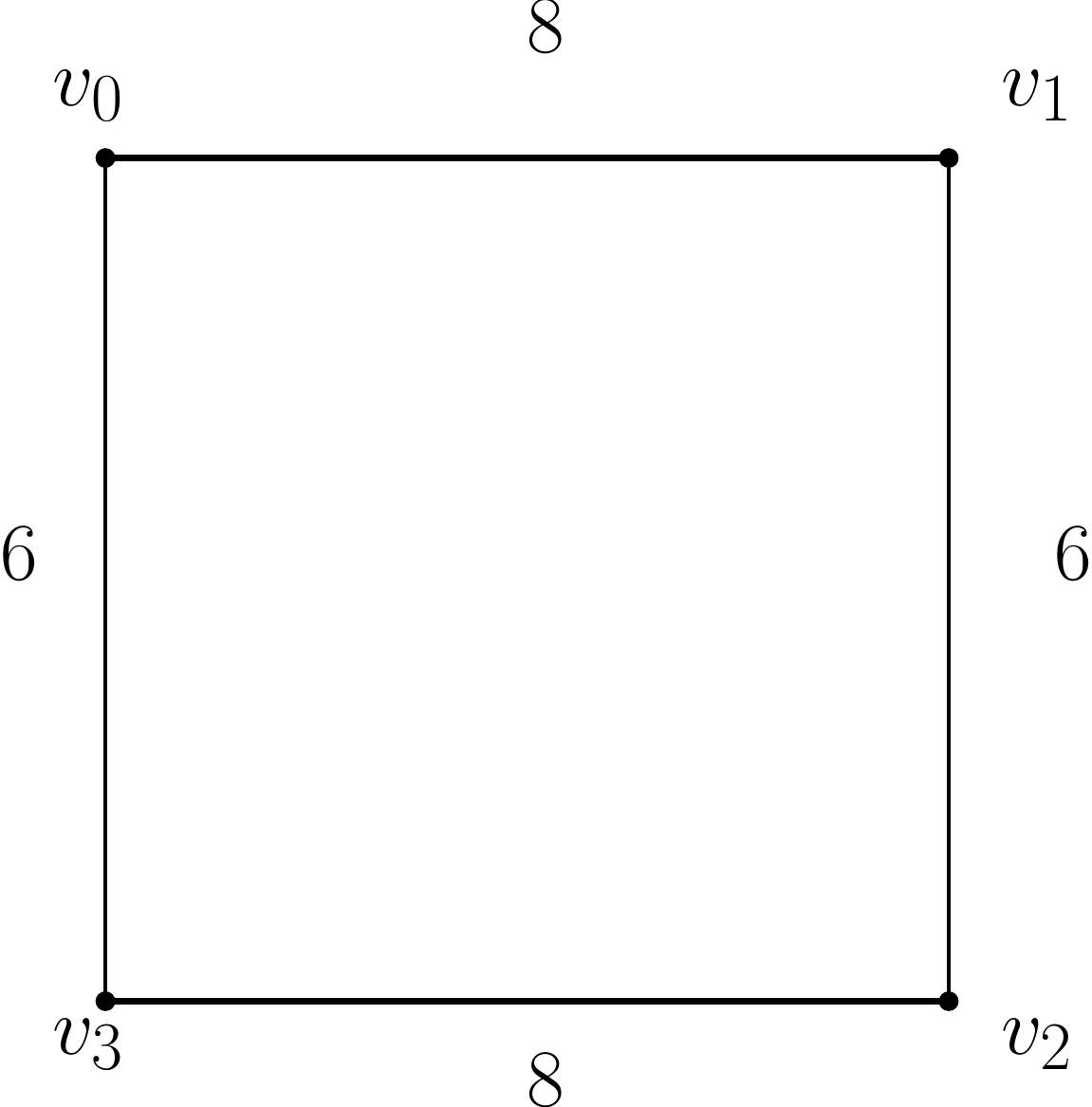}
  \caption{Quotient $\Gamma_\psi\backslash\cE(\cT)$}
  \label{fig:fundom}
\end{figure}

The space of harmonic cocycles on $\Gamma_\psi\backslash\cE(\cT)$ has dimension $25$. Taking the common eigenspace on which $T_{19}$ acts as $4$ and $T_2$ acts as $0$ we obtain a $2$-dimensional subspace associated to $f_E$. An integral basis of this subspace is given by the harmonic cocycles $h_0$ and $h_1$, which we proceed to describe. The harmonic cocycle $h_0$ has support on four edges, and takes values in $\pm 1$ there. In fact, it takes the value $+1$ and $-1$ once on two edges connecting $v_0$ and $v_3$, and the value $+1$ and $-1$ on two edges connecting $v_1$ and $v_2$. The harmonic cocycle $h_1$ can be described exactly as $h_0$, but they have disjoint supports.

Moreover, $T_3$ satisfies:
\[
T_3(h_0)=-h_1\text{ and } T_3(h_1)=h_0.
\]

Define also $\alpha = 2\sqrt{5}-1$, and let $M=F(\sqrt{\alpha})$, which is ATR. In this case, the resulting field is $K=\Q(\sqrt{-19})$, which has class number $1$. Let $g$ be a root in $\C_p$ of the polynomial $x^2 - x + 5$, and let
\[
\tau = (6   g + 1) + (8   g + 12)   13 + (7   g + 11)   13^{2} + (3   g + 3)   13^{3} + (12   g + 9)   13^{4} + (6   g + 1)   13^{5} + \cdots 
\]
be a fixed point under an embedding $\varphi$ of the maximal order of $K$ into the Eichler order $R_0(1)$ of the quaternion algebra $B= (-3, -1)$, having basis:
\[
R_0(1) = \langle 1, j, 5/2j + 5/2k, 1/2 + 1/2i - 3/2j - 3/2k \rangle.
\]
We consider the divisor $D=(\tau)-(\bar\tau)$ and calculate:
\begin{align*}
J_0 = \Xint\times_\tau^{\overline{\tau}} \omega_{h_0} = &(8   g + 12) + (3   g + 1)   13 + (7   g + 10)   13^{2} + (8   g + 8)   13^{3} + (7   g + 1)   13^{4} +\\
&(7   g + 6)   13^{5} + (9   g + 8)   13^{6} +  (7   g + 7)   13^{7} + (4   g + 9)   13^{8} + (4   g + 4)   13^{9}  +\\
&(5   g + 12)   13^{10} + (8   g + 1)13^{11} + (11   g + 11)   13^{12} +\cdots
\end{align*}
and in fact $J_1 = J_0$.

We calculate the image of $J_0$ under the Tate uniformization map, to get coordinates $(x,y)\in E(\C_p)$:
\begin{align*}
x = &(12  h^{3} + 3 h^{2} + 4  h + 1) + (9   h^{3} + 10   h^{2} + h + 9)   13 + (6   h^{3} + 5   h^{2} + 3   h + 9)   13^{2} +\\
&(6   h^{3} + 8   h^{2} + 8)   13^{3} + (8   h^{3} + 2   h^{2} + 5   h + 8)   13^{4} + (4   h^{3} + 9   h^{2} + 4   h + 6)   13^{5} + \cdots
\end{align*}
and
\begin{align*}
y = &(11   h^{3} + 5   h^{2} + 2   h + 9) + (12   h^{3} + 12   h^{2} + h + 10)   13 + (7   h^{2} + 10   h + 7)   13^{2} + \\
&(2   h^{3} + 5   h^{2} + 9   h + 7)   13^{3} + (5   h^{3} + 2   h^{2} + 4   h + 4)   13^{4} + (3   h^{3} + 3   h + 11)   13^{5} + \cdots
\end{align*}
Here, $h$ satisfies:
\[
h^4 + 3h^2 + 12h + 2 = 0.
\]

We have carried out all the calculations to precision $13^{80}$, and up to this precision it turns out that $x$ is a root of the irreducible polynomial:
\[
P_x(T) = T^{4} + 60T^{3} + 19728T^{2} + 380160T + 40144896
\]
and $y$ is a root of the irreducible polynomial:
\begin{align*}
P_y(T) = &T^{8} - 1166400T^{6} + 5027006707200T^{4} - 321342050396160000T^{2}+ \\
&75899706935371407360000.
\end{align*}
The polynomial $P_y(T)$ factors as two quartics over $F$. We let $\cM/F$ be the quartic extension generated by one of these two factors, and we remark that $P_y(T)$ splits completely over $\cM$, so it is actually the splitting field of $P_y(T)$. Let $\alpha$ be a root of $P_y(T)$ in $\cM$. Then the coordinates $(x,y)$ are defined over $\cM$ and correspond to the point:
\[
((1/12960\sqrt{5} - 1/4320)\alpha^2 + 3/2\sqrt{5} + 15/2,\alpha)\in E(\cM).
\]
Since $\cM$ contains the field $M$, we can compute the trace of this point down to $M$, to obtain the point of infinite order
\[
P_D = \left(\frac{474\sqrt{5}+750}{19} , \frac{20412\sqrt{5} + 19440}{361}\sqrt{\alpha}  \right)\in E(M).
\]

\bibliographystyle{amsalpha}
\bibliography{refs}

\end{document}